\documentclass[11pt]{article}
\usepackage[a4paper,margin=28mm]{geometry}
\usepackage{amsmath,amssymb,amsthm,mathtools}
\usepackage{booktabs,longtable,array}
\usepackage{microtype}
\usepackage[colorlinks=true,linkcolor=blue,citecolor=blue,urlcolor=blue]{hyperref}
\usepackage[T1]{fontenc}
\usepackage{lmodern}

\newtheorem{theorem}{Theorem}[section]
\newtheorem{lemma}[theorem]{Lemma}
\newtheorem{proposition}[theorem]{Proposition}

\newtheorem{definition}[theorem]{Definition}

\newcommand{\td}{\operatorname{td}}
\newcommand{\one}{\mathbf 1}

\title{Regular $K_3$-Irregular Graphs of Every Regularity at Least Nine}
\author{Zhanhe Zhang\\
\small School of International Trade and Economics, Central University of Finance and Economics,\\
\small 39 South College Road, Haidian District, Beijing 100081, China\\
\small \texttt{2024311337@email.cufe.edu.cn}}
\date{}

\begin{document}
\maketitle

\begin{abstract}
For a vertex $v$ of a graph $G$, the triangle-degree $\td_G(v)$ is the number of triangles containing $v$.  A graph is \emph{triangle-distinct}, or $K_3$-irregular, if its vertex triangle-degrees are pairwise distinct.  Chartrand, Erd\H{o}s, and Oellermann asked whether a regular $K_3$-irregular graph exists.  We prove that such graphs exist for every regularity $r\ge9$.  More precisely, for every integer $k\ge15$ we construct a $2k$-regular triangle-distinct graph on $4k+2$ vertices; complementation gives a $(2k+1)$-regular example of the same order.  The construction uses two antiregular threshold blocks joined by a zero-one matrix with prescribed margins, followed by matrix $2$-switches that preserve those margins.  After a uniform reference perturbation, exactly three triangle-degree collisions remain.  Switches chosen according to parity remove two of them, and the last possible collision is controlled by a short quadratic discriminant argument.  Odd $k\ge17$ and even $k\ge62$ are handled symbolically, while the remaining 24 values are settled by exact finite verification.  Together with the known examples for $9\le r\le29$, this closes the positive existence problem for every $r\ge9$.
\end{abstract}

\medskip
\noindent\textbf{Keywords:} triangle-degree; triangle-distinct graph; $K_3$-irregular graph; regular graph; graph construction; $2$-switch.

\section{Introduction and main results}
For a fixed graph $F$, the $F$-degree of a vertex $v$ in a graph $G$ is the number of copies of $F$ containing $v$ \cite{ChartrandHolbert1987}.  A graph is $F$-irregular when these $F$-degrees are pairwise distinct.  The case $F=K_3$ is particularly natural: the $K_3$-degree is simply the number of triangles through a vertex.  Following current terminology, we call such graphs \emph{triangle-distinct}.  There is an immediate tension in the regular case: every vertex must have the same ordinary degree while participating in a different number of triangles.

Chartrand, Erd\H{o}s, and Oellermann explicitly asked in 1988 whether regular $K_3$-irregular graphs exist \cite{ChartrandErdos1988}.  The question was restated as Problem~9 by Berikkyzy et al.\ \cite{Berikkyzy2024}, who developed the modern theory of triangle-distinct graphs but found no regular example.  Stevanovi\'c et al.\ subsequently found 11 regular examples with regularities between $10$ and $13$; in the published version they also showed that, for every positive integer $m$, there is a family of $2^m$ regular triangle-distinct graphs having a common order and size \cite{Stevanovic2024}.  Hak, Kozerenko, and Serdiuk proved nonexistence for regularities at most $7$, narrowed the regularity-$8$ case in their paper to six possible orders, and found examples for every $9\le r\le30$ \cite{Hak2026}.  Their Question~3 asks for a general, preferably inductive, construction of regular $K_3$-irregular graphs for all $r\ge9$.

We close the positive existence problem.  Our main theorem is the following.

\begin{theorem}\label{thm:main}
For every integer $r\ge9$, there exists a finite simple $r$-regular triangle-distinct graph.
\end{theorem}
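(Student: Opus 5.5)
The plan is to reduce Theorem~\ref{thm:main} to a single even-regularity construction and then invoke complementation together with the known small cases. For the known cases we use \cite{Hak2026}, which gives examples for every $9\le r\le 30$. So it suffices to produce, for every $k\ge15$, a $2k$-regular triangle-distinct graph $G_k$ on $n=4k+2$ vertices. Its complement $\overline{G_k}$ is then $(n-1-2k)=(2k+1)$-regular on the same vertex set. Triangle-degrees in the complement are determined by the degree sequence and triangle-degrees of $G$: for a vertex $v$ of degree $d$ in an $r$-regular graph on $n$ vertices, $\td_{\overline G}(v)=\binom{n-1-d}{2}-(\text{non-edges among non-neighbours})$. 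Via the standard count of triangles plus anti-triangles at a vertex, one gets $\td_{\overline G}(v)=C(n,r)+\td_G(v)$ for a constant $C$ depending only on $n,r$. Hence $\overline{G_k}$ is triangle-distinct exactly when $G_k$ is. The regularities $2k$ and $2k+1$ for $k\ge15$ cover all $r\ge30$, and together with \cite{Hak2026} this covers every $r\ge9$.

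For the construction of $G_k$, take two blocks $A,B$ of size $2k+1$, each inducing an antiregular threshold graph. In such a graph the degrees within the block are $0,1,\dots,2k$ with one value repeated, and the nested neighbourhoods make the triangle counts explicit. Join $A$ to $B$ by a bipartite $0$--$1$ matrix $M$ whose row and column sums are $2k$ minus the internal degree, so that the graph is $2k$-regular. Gale--Ryser guarantees such an $M$, and we choose a canonical staircase-like reference $M_0$.

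Next compute $\td(v)$ for every $v$ as an explicit polynomial in $k$ and the position of $v$. The count has three parts: triangles inside a block, triangles with two vertices in $v$'s block, and triangles with two vertices in the other block. Then apply margin-preserving $2$-switches to $M$, which keep regularity, to separate coincident values. I would first apply a uniform perturbation that makes most values strictly monotone in position, leaving a bounded number of collisions. Then I would apply a few further switches, chosen by the parity of $k$, whose effect on $\td$ is local and computable, to break the remaining collisions.

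The main obstacle is the final step. Each switch changes triangle-degrees at many vertices by small amounts, so a fix can create new coincidences. I would try to confine the effect to a constant number of vertices, and then show that any remaining equality $\td(u)=\td(w)$ reduces to a quadratic equation in $k$ with no integer root, for instance by a discriminant that is not a perfect square. This should work symbolically for all sufficiently large $k$. The finitely many small $k$ left over would be checked by exact computer verification of the explicit graphs.
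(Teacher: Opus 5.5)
Your top-level reduction is exactly the paper's proof of Theorem~\ref{thm:main}: build a $2k$-regular triangle-distinct graph on $4k+2$ vertices for every $k\ge15$ (Theorem~\ref{thm:k}), pass to the complement, and use the witnesses of \cite{Hak2026} for the small regularities. There are three small slips in that part, none of which affects injectivity:
\begin{itemize}
\item Since triangles plus anti-triangles at $v$ is constant for a regular graph, the identity is $\td_{\overline G}(v)=\binom{N-r-1}{2}-\frac{Nr}{2}+r^2-\td_G(v)$, a constant \emph{minus} $\td_G(v)$ (Lemma~\ref{lem:complement}).
\item What is subtracted from $\binom{N-r-1}{2}$ is the number of \emph{edges} of $G$ among the non-neighbours of $v$, not the non-edges.
\item An antiregular graph on $2k+1$ vertices has degrees $0,\dots,2k-1$ (or $1,\dots,2k$) with one value repeated, not $0,\dots,2k$.
\end{itemize}

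The genuine gap is that the existence of these graphs, which is the whole content of the theorem for $r\ge31$, is not established. You describe a search strategy, but give no explicit join matrix, no explicit switches, no computed spectrum, and no argument that only boundedly many coincidences survive or that they can be removed. In the paper this occupies Sections~3--6. Its core is Proposition~\ref{prop:three}. The symmetric base matrix gives $a_i=b_i$ for every $i$. The specific perturbation $Y_k$ is designed so that the $B$-bulk sits exactly $2$ above the $A$-bulk, so cross-coincidences reduce to $(i-j)(2k-1-2i-2j)=2$ with an odd second factor. This leaves exactly three collisions. It is this parity mechanism, not monotonicity in position, that controls the bulk.

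Moreover, the one concrete mechanism you name for the last step does not suffice. The residual equation is quadratic in the position $s$ for fixed $k$, not quadratic in $k$. For example, $\mathcal H_{\rm o}(s)+c=V_{\rm o}$ has discriminants $8h+33,8h+25,8h+17,8h+9$, and these are odd perfect squares for infinitely many $h$. The collision does occur: for $h=17$ the paper's formulas give $\mathcal H_{\rm o}(14)+3=585=V_{\rm o}$ with $14\in D_3^{\rm o}$, so $Z_{35}^{\rm o}$ is not triangle-distinct. What closes the argument is two further ideas:
\begin{itemize}
\item The four discriminants lie in a window of width $24$, narrower than the gap between consecutive relevant odd squares, and the two roots have half-integral sum. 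Hence at most one collision can exist (Lemma~\ref{lem:odddefect}).
\item If it exists, a \emph{conditional} repair switch $S_3$ lowers the special value by exactly one. Any new collision would then need the same square with correction $c-1$, hence the same root, and this is excluded by checking intersections of the correction domains (Lemma~\ref{lem:oddrepair}).
\end{itemize}
Note also that the switches do not act on a bounded set of vertices. $S_2$ shifts whole bands, such as all $a_i,b_i$ with $2h+2\le i\le3h$, so cross-band coincidences must be re-examined (Lemma~\ref{lem:oddordinary}). Without this content, your argument proves the theorem only for $9\le r\le30$.
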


The new infinite construction is captured by a stronger parameterized statement.

\begin{theorem}\label{thm:k}
For every integer $k\ge15$, there exists a $2k$-regular triangle-distinct graph on $4k+2$ vertices.  Its complement is a $(2k+1)$-regular triangle-distinct graph on the same number of vertices.
\end{theorem}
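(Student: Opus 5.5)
We follow the route announced in the abstract: build a $2k$-regular graph on $n=4k+2$ vertices from two blocks, compute all triangle-degrees explicitly, and then obtain the $(2k+1)$-regular example by complementation.

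\emph{Complement part.} Let $G$ be $r$-regular on $n$ vertices. For each vertex $v$, we count the triangles through $v$ in $\overline G$ by counting pairs $\{x,y\}$ of non-neighbours of $v$ and subtracting those with $xy\in E(G)$. Since $\overline G$ is $(n-1-r)$-regular, this gives an expression of the form
\[
\td_{\overline G}(v)=\binom{n-1-r}{2}-\Bigl(\tfrac12\bigl(rn-2r\bigr)-r(r-1)+\td_G(v)\Bigr)
\]
up to routine bookkeeping of edges incident to $N(v)$. The constant depends only on $n$ and $r$, so $\td_{\overline G}(v)=C(n,r)-\td_G(v)$. Triangle-distinctness of $G$ therefore transfers to $\overline G$. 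With $n=4k+2$ and $r=2k$ we get $n-1-r=2k+1$.

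\emph{Construction.} Take two blocks $A$ and $B$, each of size $2k+1$. Make each block a threshold (antiregular-type) graph in which the internal degrees $d_A(a_i)$ take distinct prescribed values. Join the blocks by a $0$--$1$ matrix $M$ whose row sums are $2k-d_A(a_i)$ and whose column sums are $2k-d_B(b_j)$, so the whole graph is $2k$-regular. Gale--Ryser guarantees that such an $M$ exists once the margins are checked. Choose a canonical staircase-type $M$ so that $\td(v)$ becomes an explicit quadratic polynomial in the index of $v$. This requires three counts: the triangles inside the threshold block, which are closed-form because threshold graphs are nested; the triangles using one cross edge and one internal edge, which are sums over nested neighbourhoods; and the triangles with two cross edges, which come from column overlaps.

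\emph{Removing collisions.} Next apply a uniform reference perturbation, namely a fixed $2$-switch $M\mapsto M'$ that swaps a $2\times2$ submatrix $\begin{pmatrix}1&0\\0&1\end{pmatrix}$ to its transpose. Such a switch preserves the margins and hence regularity, and it changes only a few $\td$ values by computable amounts. We then tabulate the $\td$ values as functions of $k$. The expectation is that the sequences for $A$ and $B$ are each strictly monotone, so the only possible coincidences are between $A$ and $B$. We identify these and show there are exactly three. Two of them we break using further $2$-switches chosen according to the parity of $k$; each such switch shifts a single colliding pair by $\pm1$ or $\pm2$ without creating new equalities.

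The final potential collision reduces to an equation $\td(a_i)=\td(b_j)$ in which both sides are quadratics in $k$ and the index. We show the discriminant is not a perfect square, or that no root is an integer in range, for odd $k\ge17$ and even $k\ge62$. The remaining finitely many values $15\le k\le61$ are checked by exact computation.

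\emph{Main obstacle.} The hardest step is the exact triangle-degree bookkeeping after the switches. Each $2$-switch changes triangle counts at all common neighbours of the four switched vertices, so we must verify that no new collisions appear. We handle this by placing the switches at positions where these neighbourhoods are controlled by the threshold structure. The last collision is then settled by the discriminant analysis and the finite verification.
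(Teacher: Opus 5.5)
Your last step would fail. You propose to show that for odd $k\ge17$ and even $k\ge62$ the final collision equation has a non-square discriminant or no integer root in range. For the construction you are sketching, this is false for infinitely many $k$.

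In the odd case $k=2h+1$, after the parity switch the special value is $V_{\rm o}=2h^2+7$. The equation $\mathcal H_{\rm o}(s)+3=V_{\rm o}$ has discriminant $8h+33$. Whenever $8h+33=m^2$ with $m\equiv1\pmod4$, the root $s=(4h+1-m)/4$ is an integer lying in the correction-$3$ band $[3,h-2]$. For example, $k=35$ gives $8h+33=13^2$ and $s=14$, so $a_{56}=a_{54}=585$ in $Z_{35}^{\rm o}$. In the even case, $k=88$ likewise gives $8h+9=19^2$ and a genuine collision.

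The discriminants only show that \emph{at most one} collision can occur. The four values $8h+57-8c$ lie in a window of width $24$, consecutive relevant odd squares differ by at least $40$, and the two roots have half-integral sum. The missing idea is the conditional repair switch $S_3$. It lowers the special value by exactly $1$ and moves the two displaced $B$-values into an empty gap of the low spectrum. You then need one more argument. The old and new discriminants together lie in a window of width $32$, so a new collision would have to reuse the same square. That forces $c_{\rm new}=c_{\rm old}-1$ and the same root $s$ in two consecutive correction domains. The disjointness of those domains rules this out, up to one isolated point that is excluded separately.

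Apart from this, the core of the proof is asserted rather than carried out, and parts of it are mis-described.
\begin{itemize}
\item The reference perturbation is not one fixed switch. Starting from the symmetric $X_k^0$, where $a_i=b_i$ for all $2k+1$ indices, the paper applies the $k-2$ switches $S(a,a+2;a,2k-a)$ and then $S_1$. Lemma~\ref{lem:Yspectrum} together with Proposition~\ref{prop:three} is exactly the content you compress into ``we identify these and show there are exactly three.''
\item The profiles are not monotone: $L_k$ is concave and $H_k$ is convex. Same-type distinctness comes from the odd factor in $(i-j)(2k-1-2i-2j)$, together with a numerical separation of low and high indices.
\item A parity switch does not shift a single pair by $\pm1$ or $\pm2$. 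Since $C_kp$, $C_kq$, $Xq$ and $X^\top p$ are interval indicators, $S_2^{\rm o}$ shifts a whole band by $+1$ and single values by $h-3$, $2h$, $h+1$ and $-2$. This is what creates the special value and the new correction domains.
\item For small $k$, ``exact computation'' presupposes a candidate graph. In twelve of the $24$ residual cases the two-stage construction fails, and an additional switch found by search is needed (Table~\ref{tab:finite}).
\item No graph has pairwise distinct degrees, so the blocks cannot have distinct internal degrees; $C_k$ has exactly one repeated degree.
\end{itemize}
Your complement computation is correct and matches Lemma~\ref{lem:complement}.
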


Thus every regularity $r\ge30$ is obtained from one uniform family, and the already known witnesses for $9\le r\le29$ complete Theorem~\ref{thm:main}.  In particular, this settles the existence statement underlying Question~3 of Hak, Kozerenko, and Serdiuk: combined with the known finite witnesses, an explicit construction is available for every $r\ge9$.  It does not provide the inductive graph operation specifically suggested there.

We next outline the construction.  For each $k$, we introduce a threshold matrix $P_k$ and its complement $C_k$.  An auxiliary zero-one matrix $X$ with prescribed row and column margins determines a graph with two blocks
\[
 \mathcal A_k(X)=\begin{pmatrix}C_k&X\\X^\top&C_k\end{pmatrix}.
\]
The margin conditions alone force this graph to be $2k$-regular.  We start from a symmetric base matrix $X_k^0$ built from the same antiregular threshold pattern, for which all triangle-degrees are given by three explicit quadratic profiles.  Symmetry, however, duplicates the two block spectra.  A deterministic sequence of $2$-switches preserving the margins produces a reference matrix $Y_k$ for which only three pairs of triangle-degrees remain equal; Proposition~\ref{prop:three} is the structural core of the proof.

The three remaining collisions can then be broken almost uniformly.  One switch, chosen according to parity, resolves two of them.  Any remaining collision must involve one special value in the high part of the spectrum and is equivalent to an integral root of one of four quadratic equations.  Their discriminants lie in a short interval, so the separation of consecutive odd squares shows that at most one obstruction can occur.  If it does, one final $2$-switch removes it without creating a new equality.  This yields symbolic constructions for every odd $k\ge17$ and every even $k\ge62$.  The 24 parameters left outside these ranges are handled by exact finite verification.  Finally, complementation transfers each example of even regularity to the next odd regularity.

Section~2 records the block construction, the complement identity, and the exact variation of triangle-degree under a matrix $2$-switch.  Section~3 constructs $Y_k$ and proves that it has exactly three residual collisions.  Sections~4 and~5 treat odd and even $k$, respectively.  Section~6 handles the finite parameters, and Section~7 completes the proof of the main theorem.  Two short appendices collect the support formulas and correction table used in the spectrum calculation.
\section{Preliminaries}

\subsection{Complementation}
The complement relation for vertex triangle-degrees goes back to Nair and Vijayakumar \cite{NairVijayakumar1994}; the following form for regular graphs is also used in later work on triangle-distinct graphs.  We include the short proof for completeness.

\begin{lemma}\label{lem:complement}
Let $G$ be an $r$-regular graph on $N$ vertices.  Then for every $v\in V(G)$,
\[
 \td_{\overline G}(v)
 =\binom{N-r-1}{2}-\frac{Nr}{2}+r^2-\td_G(v).
\]
Consequently, the complement of a regular triangle-distinct graph is regular and triangle-distinct.
\end{lemma}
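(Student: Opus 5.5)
We count, for a fixed vertex $v$, the pairs of vertices $\{x,y\}$ not containing $v$, sorted by how many of the three pairs $vx$, $vy$, $xy$ are edges of $G$. Let $A=N(v)$ with $|A|=r$, and let $B=V(G)\setminus(A\cup\{v\})$ with $|B|=N-r-1$. A triangle of $\overline G$ through $v$ is a pair $\{x,y\}\subseteq B$ with $xy\notin E(G)$. So
\[
\td_{\overline G}(v)=\binom{N-r-1}{2}-e_G(B),
\]
where $e_G(B)$ is the number of edges of $G$ inside $B$. Also $\td_G(v)=e_G(A)$. The task is therefore to express $e_G(B)$ in terms of $e_G(A)$.

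We use regularity through the identity $\sum_{u\in A}\deg(u)=r^2$. Each $u\in A$ has one edge to $v$, so the degree sum over $A$ gives
\[
r^2=r+2e_G(A)+e_G(A,B),
\]
that is, $e_G(A,B)=r^2-r-2\,\td_G(v)$. Next we count all edges. There are $Nr/2$ of them, and they split as
\[
\frac{Nr}{2}=r+e_G(A)+e_G(A,B)+e_G(B),
\]
the first term being the $r$ edges at $v$. Substituting gives
\[
e_G(B)=\frac{Nr}{2}-r-\td_G(v)-(r^2-r-2\,\td_G(v))=\frac{Nr}{2}-r^2+\td_G(v).
\]
Inserting this into the first display yields the stated formula.

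For the consequence, note that $\overline G$ is $(N-1-r)$-regular. The formula shows that $\td_{\overline G}(v)=c-\td_G(v)$ for a constant $c$ depending only on $N$ and $r$. The map $t\mapsto c-t$ is injective, so pairwise distinct triangle-degrees in $G$ give pairwise distinct triangle-degrees in $\overline G$. There is no real obstacle here; the only care needed is the edge bookkeeping, in particular not double-counting the $r$ edges at $v$.
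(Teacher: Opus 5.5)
Your proof is correct and is essentially the paper's argument: you make the same split $A=N(v)$, $B=V(G)\setminus N[v]$, use the degree sum over $A$ to get $e_G(A,B)=r(r-1)-2\td_G(v)$, use the total edge count $Nr/2$ to obtain $e_G(B)=\frac{Nr}{2}-r^2+\td_G(v)$, and finish with $\td_{\overline G}(v)=\binom{|B|}{2}-e_G(B)$. Your opening remark about sorting pairs by how many of $vx,vy,xy$ are edges is never used, but it does no harm.
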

\begin{proof}
Put $A=N_G(v)$ and $B=V(G)\setminus N_G[v]$.  Then $|B|=N-r-1$.  Counting the degrees of vertices in $A$ gives
\[
 e_G(A,B)=r(r-1)-2\td_G(v).
\]
Using $e(G)=Nr/2$ and $e_G(A)=\td_G(v)$, we obtain
\[
 e_G(B)=\frac{Nr}{2}-r^2+\td_G(v).
\]
The neighbors of $v$ in $\overline G$ are precisely the vertices of $B$, so
\[
 \td_{\overline G}(v)=\binom{|B|}{2}-e_G(B),
\]
which is the displayed identity.  The right-hand side is a constant minus $\td_G(v)$, so pairwise distinctness is preserved.  The complement has degree $N-r-1$.
\end{proof}

\subsection{The antiregular block construction}
Fix an integer $k\ge15$ and put
\[
 I_k=\{0,1,\ldots,2k\},\qquad n=2k+1.
\]
Let $P_k$ and $C_k$ be the symmetric $n\times n$ zero-one matrices
\[
 (P_k)_{ij}=\begin{cases}
 1,&i\ne j\text{ and }i+j\ge2k,\\
 0,&\text{otherwise},
 \end{cases}
 \qquad C_k=J-I-P_k.
\]
Thus
\[
 (C_k)_{ij}=1\iff i\ne j\text{ and }i+j<2k.
\]
The row sums of $P_k$ are
\begin{equation}\label{eq:d}
 d_i=\sum_j(P_k)_{ij}=\begin{cases}
 i+1,&0\le i<k,\\
 i,&k\le i\le2k.
 \end{cases}
\end{equation}

\begin{definition}
An $n\times n$ zero-one matrix $X$ is \emph{$k$-balanced} if
\[
 X\one=X^\top\one=(d_0,\ldots,d_{2k})^\top.
\]
For such $X$, define the graph $\Gamma_k(X)$ on two labelled copies
\[
 A=\{A_0,\ldots,A_{2k}\},\qquad
 B=\{B_0,\ldots,B_{2k}\}
\]
by the adjacency matrix
\begin{equation}\label{eq:block}
 \mathcal A_k(X)=\begin{pmatrix}C_k&X\\X^\top&C_k\end{pmatrix}.
\end{equation}
The diagonal entries of $X$ describe edges $A_iB_i$ and do not create loops.
\end{definition}

\begin{lemma}\label{lem:regular}
If $X$ is $k$-balanced, then $\Gamma_k(X)$ is $2k$-regular on $4k+2$ vertices.
\end{lemma}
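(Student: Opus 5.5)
The plan is to compute each vertex degree directly from the block adjacency matrix \eqref{eq:block} and then use the margin condition on $X$.

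First we check that $\Gamma_k(X)$ is a simple graph. The matrix $C_k=J-I-P_k$ is symmetric with zero diagonal, because $P_k$ is symmetric with zero diagonal. The off-diagonal blocks are $X$ and $X^\top$, so $\mathcal A_k(X)$ is a symmetric zero-one matrix with zero diagonal. Its diagonal entries are the $C_k$ diagonals, not the entries of $X$. Thus $\Gamma_k(X)$ is a simple graph on $2n=4k+2$ vertices.

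Next, the degree of $A_i$ is the $i$th row sum of $C_k$ plus the $i$th row sum of $X$. Since $C_k+P_k=J-I$, each row of $C_k$ sums to $n-1-d_i=2k-d_i$. By $k$-balancedness, $X\one$ has $i$th entry $d_i$, so $\deg A_i=(2k-d_i)+d_i=2k$. Similarly, $\deg B_j$ is the $j$th row sum of $X^\top$ plus that of $C_k$. This is the $j$th column sum of $X$, which equals $d_j$ by $X^\top\one=(d_0,\dots,d_{2k})^\top$, plus $2k-d_j$, giving $2k$ again.

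Hence every vertex has degree $2k$. There is no real obstacle here. The only points to verify are that $C_k$ has zero diagonal and row sums $2k-d_i$, and that both margin conditions are used, rows for $A$ and columns for $B$. The explicit formula \eqref{eq:d} for $d_i$ is not even needed beyond $d_i\le 2k$, which holds automatically.
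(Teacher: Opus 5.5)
Your proposal is correct and follows the paper's own argument: the internal degree of $A_i$ is $2k-d_i$ and the cross degree is $d_i$ by the row margin, with the column margin handling $B_i$. Your additional check that $\mathcal A_k(X)$ is a symmetric zero-one matrix with zero diagonal is a harmless bonus that the paper leaves implicit.
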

\begin{proof}
The internal degree of $A_i$ is $2k-d_i$, while its degree into the other block is $d_i$.  Hence its total degree is $2k$.  The column margin condition gives the same conclusion for $B_i$.
\end{proof}

Write
\[
 a_i(X)=\td_{\Gamma_k(X)}(A_i),\qquad
 b_i(X)=\td_{\Gamma_k(X)}(B_i).
\]

\begin{lemma}[triangle-degree formula]\label{lem:tdformula}
For every zero-one matrix $X$,
\begin{align}
 a_i(X)&=\frac12(C_k^3)_{ii}+(C_kXX^\top)_{ii}
          +\frac12(XC_kX^\top)_{ii},\label{eq:aformula}\\
 b_i(X)&=\frac12(C_k^3)_{ii}+(C_kX^\top X)_{ii}
          +\frac12(X^\top C_kX)_{ii}.\label{eq:bformula}
\end{align}
\end{lemma}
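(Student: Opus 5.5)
The plan is to use the standard identity that for a simple graph with symmetric zero-one adjacency matrix $M$ (zero diagonal), $\td(v)=\tfrac12(M^3)_{vv}$. A closed walk of length three from $v$ is exactly a triangle through $v$ traversed in one of its two orientations. I apply this to $M=\mathcal A_k(X)$ from \eqref{eq:block}. The diagonal of $\mathcal A_k(X)$ is zero because $C_k$ has zero diagonal, and the diagonal entries of $X$ sit off the main diagonal of the $2n\times 2n$ matrix. So $\Gamma_k(X)$ is a simple graph and the identity applies for every zero-one $X$, with no balance hypothesis needed.

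The key step is to compute the diagonal blocks of $M^3$ by block multiplication. First,
\[
 M^2=\begin{pmatrix}C_k^2+XX^\top & C_kX+XC_k\\ X^\top C_k+C_kX^\top & X^\top X+C_k^2\end{pmatrix}.
\]
Multiplying by $M$, the upper-left block of $M^3$ is
\[
 (C_k^2+XX^\top)C_k+(C_kX+XC_k)X^\top
 =C_k^3+XX^\top C_k+C_kXX^\top+XC_kX^\top .
\]
In the same way, the lower-right block is $C_k^3+X^\top XC_k+C_kX^\top X+X^\top C_kX$.

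Next I take the $(i,i)$ diagonal entries. Since $C_k$ and $XX^\top$ are symmetric, $(XX^\top C_k)_{ii}=((C_kXX^\top)^\top)_{ii}=(C_kXX^\top)_{ii}$. The two middle terms therefore combine to $2(C_kXX^\top)_{ii}$. Halving gives \eqref{eq:aformula}, and the identical argument with $X^\top X$ gives \eqref{eq:bformula}.

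As a sanity check I would read each term combinatorially. $\tfrac12(C_k^3)_{ii}$ counts triangles inside block $A$. $(C_kXX^\top)_{ii}=\sum_{j}(C_k)_{ij}(XX^\top)_{ji}$ counts triangles $A_iA_jB_\ell$. $\tfrac12(XC_kX^\top)_{ii}$ counts triangles $A_iB_\ell B_m$, halved because each such triangle gives the ordered pair $(\ell,m)$ in both orders. There is no real obstacle here. The only point needing care is that the diagonal of $M$ is zero, which ensures $\tfrac12(M^3)_{vv}$ counts genuine triangles.
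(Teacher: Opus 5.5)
Your proposal is correct and follows the paper's proof: you use $\td(v)=\tfrac12(\mathcal A_k(X)^3)_{vv}$, expand the diagonal blocks of the cube, and merge the two mixed terms $C_kXX^\top$ and $XX^\top C_k$ via the transpose identity. Your explicit check that $\mathcal A_k(X)$ has zero diagonal (since $C_k=J-I-P_k$ and $P_k$ has zero diagonal) is a small detail the paper leaves implicit.
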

\begin{proof}
The diagonal of $\mathcal A_k(X)^3$ is twice the triangle-degree vector.  Multiplying the block matrix in \eqref{eq:block} three times gives
\[
 (\mathcal A_k(X)^3)_{AA}=C_k^3+C_kXX^\top+XX^\top C_k+XC_kX^\top.
\]
The first two mixed terms have the same diagonal, yielding \eqref{eq:aformula}.  The $BB$ block gives \eqref{eq:bformula}.
\end{proof}

\subsection{Matrix switches}
Suppose $x_{rc}=x_{sd}=1$ and $x_{rd}=x_{sc}=0$.  The switch
\[
 S(r,s;c,d)
\]
deletes $(r,c),(s,d)$ and inserts $(r,d),(s,c)$.  It preserves all row and column sums.  With
\[
 p=e_r-e_s,
 \qquad q=e_d-e_c,
\]
the new matrix is $X'=X+pq^\top$.

\begin{lemma}[exact switch variation]\label{lem:variation}
For $X'=X+pq^\top$ as above, the changes in triangle-degree are
\begin{align}
 a_i(X')-a_i(X)
 &=p_i(C_kXq)_i+(C_kp)_i(Xq)_i+2p_i(C_kp)_i\notag\\
 &\quad+p_i(XC_kq)_i+\frac12(q^\top C_kq)p_i^2,\label{eq:deltaA}\\
 b_i(X')-b_i(X)
 &=q_i(C_kX^\top p)_i+(C_kq)_i(X^\top p)_i+2q_i(C_kq)_i\notag\\
 &\quad+q_i(X^\top C_kp)_i+\frac12(p^\top C_kp)q_i^2.\label{eq:deltaB}
\end{align}
\end{lemma}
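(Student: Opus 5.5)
We prove the identity by substituting $X'=X+pq^\top$ into the formula \eqref{eq:aformula} of Lemma~\ref{lem:tdformula} and expanding; \eqref{eq:deltaB} then follows by symmetry. The term $\tfrac12(C_k^3)_{ii}$ does not depend on $X$ and cancels. So $a_i(X')-a_i(X)$ is the change in the quadratic form $X\mapsto (C_kXX^\top)_{ii}+\tfrac12(XC_kX^\top)_{ii}$. Expanding in $pq^\top$ gives cross terms, which are linear in the perturbation, and quadratic terms.

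For the first summand,
\[
X'X'^\top=XX^\top+Xqp^\top+pq^\top X^\top+(q^\top q)pp^\top .
\]
Here $q^\top q=2$, and we take the $(i,i)$ entry after multiplying by $C_k$ on the left:
\begin{itemize}
\item $(C_kXqp^\top)_{ii}=(C_kXq)_ip_i$;
\item $(C_kpq^\top X^\top)_{ii}=(C_kp)_i(Xq)_i$;
\item $2(C_kpp^\top)_{ii}=2(C_kp)_ip_i$.
\end{itemize}
These are exactly the first three terms of \eqref{eq:deltaA}.

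For the second summand,
\[
X'C_kX'^\top=XC_kX^\top+XC_kqp^\top+pq^\top C_kX^\top+(q^\top C_kq)pp^\top .
\]
The two cross terms have equal diagonals, since $C_k$ is symmetric and $(pq^\top C_kX^\top)_{ii}=p_i(XC_kq)_i$. The factor $\tfrac12$ therefore combines them into $p_i(XC_kq)_i$. The last term contributes $\tfrac12(q^\top C_kq)p_i^2$. This reproduces \eqref{eq:deltaA}.

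For $b_i$, we use $X'^\top=X^\top+qp^\top$ in \eqref{eq:bformula}. This is the same computation with the roles $(X,p,q)\mapsto(X^\top,q,p)$, and it uses $p^\top p=2$. Hence \eqref{eq:deltaB} follows verbatim.

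No step is a real obstacle; the computation is a direct expansion. The only points needing care are these:
\begin{itemize}
\item identifying $q^\top q=p^\top p=2$, which holds because $c\neq d$ and $r\neq s$;
\item using the symmetry of $C_k$ to merge the two mixed terms in the $XC_kX^\top$ expansion;
\item noting that the formula of Lemma~\ref{lem:tdformula} holds for every zero-one matrix, so it applies to $X'$ as well.
\end{itemize}
The hypotheses $x_{rc}=x_{sd}=1$ and $x_{rd}=x_{sc}=0$ are needed only so that $X'$ is again zero-one. The algebraic identity itself holds for any rank-one update of this shape.
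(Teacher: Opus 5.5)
Your proposal is correct and is the same argument as the paper. Both expand $X'X'^\top$ and $X'C_kX'^\top$ in the rank-one update, read off the diagonals using $q^\top q=2$ and the symmetry of $C_k$, and get \eqref{eq:deltaB} by transposing ($(X,p,q)\mapsto(X^\top,q,p)$ with $p^\top p=2$).
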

\begin{proof}
Expand
\[
 X'X'^\top=XX^\top+(Xq)p^\top+p(Xq)^\top+2pp^\top
\]
and
\[
 X'C_kX'^\top=XC_kX^\top+(XC_kq)p^\top+p(XC_kq)^\top
 +(q^\top C_kq)pp^\top.
\]
Substitution into \eqref{eq:aformula} gives \eqref{eq:deltaA}.  Transposing the calculation gives \eqref{eq:deltaB}.
\end{proof}

\section{The reference construction and its three residual collisions}

\subsection{A symmetric base matrix}
Define
\begin{equation}\label{eq:x0}
 (X_k^0)_{ij}=1
 \iff i+j>2k\quad\text{or}\quad(i=j<k).
\end{equation}
The strict upper anti-triangular part contributes $i$ entries to row $i$, and the added diagonal contributes one more exactly when $i<k$.  Thus $X_k^0$ is $k$-balanced and symmetric.

Put
\begin{align}
 L_k(i)&=(k-1)^2+(2k-1)i-2i^2,\qquad 0\le i\le k-1,\label{eq:L}\\
 M_k&=k(k-1),\label{eq:M}\\
 H_k(s)&=(k-1)^2-(2k-1)s+2s^2,
 \qquad0\le s\le k-1.\label{eq:H}
\end{align}

\begin{lemma}\label{lem:base}
For $X=X_k^0$,
\[
 a_i(X)=b_i(X)=\begin{cases}
 L_k(i),&0\le i\le k-1,\\
 M_k,&i=k,\\
 H_k(2k-i),&k+1\le i\le2k.
 \end{cases}
\]
\end{lemma}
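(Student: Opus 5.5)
Since $X_k^0$ is symmetric, $XX^\top=X^\top X=X^2$ and $X^\top C_kX=XC_kX^\top$. Lemma~\ref{lem:tdformula} then gives $a_i=b_i$ at once. It remains to evaluate
\[
 a_i=\tfrac12(C_k^3)_{ii}+(C_kX^2)_{ii}+\tfrac12(XC_kX)_{ii}.
\]
I would do this combinatorially rather than by multiplying matrices. Each term counts triangles through $A_i$ of a fixed type: all three vertices in block $A$; one further vertex in $A$ and one in $B$; or both further vertices in $B$.

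Concretely, put $N_C(i)=\{j\ne i: i+j<2k\}$ and $N_X(i)=\{j: i+j>2k\}\cup(\{i\}\text{ if }i<k)$. The three contributions are then as follows.
\begin{itemize}
 \item $T_1(i)$ is the number of pairs $j<l$ in $N_C(i)$ with $j+l<2k$. This is the number of edges of the threshold graph $C_k$ inside the neighbourhood of $i$.
 \item $T_2(i)=\sum_{j\in N_C(i)}|N_X(i)\cap N_X(j)|$.
 \item $T_3(i)$ is the number of pairs $j\ne l$ in $N_X(i)$ with $j+l<2k$, counted unordered.
\end{itemize}
Each of these is a lattice-point count in a region cut out by linear inequalities in $i,j,l$.

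I would split into the three ranges $i<k$, $i=k$, and $i>k$; for the last range write $i=2k-s$.

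For $i<k$, we have $N_C(i)=\{0,\dots,2k-1-i\}\setminus\{i\}$ and $N_X(i)=\{2k+1-i,\dots,2k\}\cup\{i\}$.
\begin{itemize}
 \item $T_3$: pairs inside $\{2k+1-i,\dots,2k\}$ have sum greater than $2k$ and contribute nothing. Only pairs $\{i,l\}$ with $i+l<2k$ count, and there are none because $l>2k-i$. So $T_3=0$.
 \item $T_2$: for $j\in N_C(i)$, the intersection $N_X(i)\cap N_X(j)$ is $\{2k+1-\min(i,j),\dots,2k\}$, with small corrections when $j$ itself or $i$ lies in the other's $X$-neighbourhood or on the diagonal. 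Summing gives a quadratic in $i$.
 \item $T_1$: this is a direct triangular count.
\end{itemize}
Adding the three and simplifying should produce $L_k(i)=(k-1)^2+(2k-1)i-2i^2$. As a check, $i=0$ should give $T_1=\binom{2k-1}{2}-(\text{pairs with }j+l\ge2k)$, which should reduce to $(k-1)^2$.

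For $i=k$, the sets are $N_C=\{0,\dots,k-1\}$ and $N_X=\{k+1,\dots,2k\}$.
\begin{itemize}
 \item $T_1=\binom{k}{2}$, since every pair of indices below $k$ has sum below $2k$.
 \item $T_2=\sum_{j<k}|\{k+1,\dots,2k\}\cap N_X(j)|=\sum_{j<k} j=\binom k2$.
 \item $T_3=0$, since every pair from $N_X$ has sum above $2k$.
\end{itemize}
The total is $k(k-1)=M_k$, as required.

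For $i=2k-s$ with $s\le k-1$, we have $N_C(i)=\{0,\dots,s-1\}$ and $N_X(i)=\{s+1,\dots,2k\}$, with the diagonal absent.
\begin{itemize}
 \item $T_1=\binom s2$.
 \item $T_2=\sum_{j<s}|N_X(j)\cap\{s+1,\dots,2k\}|$. Here $N_X(j)$ is $\{2k+1-j,\dots,2k\}\cup\{j\}$, and $j<s$ lies outside $\{s+1,\dots,2k\}$, so the sum is $\sum_{j<s} j=\binom s2$.
 \item $T_3$ counts unordered pairs $s+1\le j<l\le 2k$ with $j+l<2k$. This is where the dominant $(k-1)^2$ term comes from.
\end{itemize}
Summing should give $H_k(s)=(k-1)^2-(2k-1)s+2s^2$. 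A useful consistency check is that $H_k(s)=L_k(-s)$ formally; this suggests the two outer ranges are mirror images under the symmetry of the threshold pattern.

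The main obstacle is the bookkeeping of the diagonal entries $(X_k^0)_{jj}=1$ for $j<k$, together with the exclusion $j\ne i$ in $C_k$. These create the off-by-one corrections, especially in $T_2$ for $i<k$. I would handle them by first counting as if the diagonal were absent and then adding explicit correction terms, in the spirit of the support formulas and correction table deferred to the appendices. Finally, I would verify the resulting closed forms numerically for $k=15,16$ as a sanity check.
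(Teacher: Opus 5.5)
Your symmetry reduction, your decomposition into $T_1,T_2,T_3$, and your split into $i<k$, $i=k$, $i=2k-s$ are exactly those of the paper's proof. Your $i=k$ case and your values $T_1=T_2=\binom s2$ for $i=2k-s$ agree with it.

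What the proposal does not yet do is evaluate the sums that carry the lemma: $T_1$ and $T_2$ for $i<k$, and $T_3$ for $i=2k-s$. Your expectation about them is also mistaken, because there are no diagonal corrections at all. For $j\in N_C(i)$ you have $j\ne i$ and $i+j<2k$, so $i\notin N_X(j)$ and $j\notin N_X(i)$. Hence $|N_X(i)\cap N_X(j)|=\min(i,j)$ exactly, and $T_2(i)=\binom i2+i(2k-1-2i)=\tfrac12 i(4k-3i-3)$. In fact the diagonal edges of $X_k^0$ lie in no triangle, so no appendix-style correction table is needed here.

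For $T_1$, split $N_C(i)=\{0,\dots,i-1\}\cup\{i+1,\dots,2k-1-i\}$. Every pair meeting the first part has sum at most $2k-2$, so these pairs contribute $\binom i2+i(2k-1-2i)=T_2(i)$. Pairs inside the second part contribute $\sum_{x=i+1}^{k-1}(2k-1-2x)=(k-1-i)^2$. Thus $a_i=2T_2(i)+(k-1-i)^2=L_k(i)$. The same odd-number sum gives $T_3=\sum_{x=s+1}^{k-1}(2k-1-2x)=(k-1-s)^2$ in the high range, and hence $a_{2k-s}=2\binom s2+(k-1-s)^2=H_k(s)$.

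Separately, your proposed consistency check is false. $L_k(-s)=(k-1)^2-(2k-1)s-2s^2$ differs from $H_k(s)$ in the sign of the quadratic term; the true relation is $H_k(s)=2(k-1)^2-L_k(s)$. A sanity test against your identity would therefore fail and mislead you. The genuine parallel between the two outer ranges is the common form $2T_2+(k-1-i)^2$, resp.\ $2T_2+(k-1-s)^2$, found above. With these computations supplied, your argument is the paper's.
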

\begin{proof}
Let $R_i$ be the support of row $i$ of $X_k^0$.  For $i<k$,
\[
 R_i=\{i\}\cup\{2k-i+1,\ldots,2k\},
\]
while for $i\ge k$,
\[
 R_i=\{2k-i+1,\ldots,2k\}.
\]
Because $X_k^0$ is symmetric, it suffices to compute the $A$-side.

For $i<k$, the number of triangles using three $A$-vertices is
\[
 \frac12(C_k^3)_{ii}=k^2-2k+1+\frac{i-i^2}{2}.
\]
The set $R_i$ spans no $C_k$-edge, and
\[
 (C_kX_k^0(X_k^0)^\top)_{ii}
 =\sum_{j\in N_{C_k}(i)}|R_i\cap R_j|
 =\frac{i(4k-3i-3)}2.
\]
Their sum is $L_k(i)$.

For $i=k$, the two nonzero contributions are both $\binom{k}{2}$, giving $M_k$.

Finally put $i=2k-s$, where $0\le s\le k-1$.  Then
\[
 N_{C_k}(i)=\{0,\ldots,s-1\},\qquad
 R_i=\{s+1,\ldots,2k\}.
\]
The first and mixed contributions are both $\binom{s}{2}$.  The number of $C_k$-edges inside $R_i$ is
\[
 \sum_{x=s+1}^{k-1}(2k-2x-1)=(k-1-s)^2.
\]
Thus the total is
\[
 s(s-1)+(k-1-s)^2=H_k(s).
\]
\end{proof}

\subsection{The reference perturbation}
Starting from $X_k^0$, perform, in the displayed order, the switches
\begin{equation}\label{eq:shift2}
 S(a,a+2;a,2k-a),\qquad a=0,1,\ldots,k-3,
\end{equation}
then perform
\begin{equation}\label{eq:S1}
 S_1=S(2,k-1;0,k-1).
\end{equation}
Let the resulting matrix be $Y_k$.  The legality of the entire sequence is recorded explicitly in the next lemma; the final row and column supports are listed in Appendix~A.

\begin{lemma}[legality of the reference switches]\label{lem:referencelegal}
For every $k\ge15$, the switches in \eqref{eq:shift2}, taken in the displayed order, are all legal, and the final switch $S_1$ in \eqref{eq:S1} is legal.  Consequently, every step preserves the row and column margins and $Y_k$ is $k$-balanced.
\end{lemma}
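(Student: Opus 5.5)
The plan is to verify the legality conditions directly. We track exactly which entries of $X_k^0$ have been altered before each switch is applied. Recall that $S(r,s;c,d)$ is legal on the current matrix $X$ precisely when $x_{rc}=x_{sd}=1$ and $x_{rd}=x_{sc}=0$. Once legality is established, preservation of margins is automatic, since each legal switch preserves all row and column sums. Hence $Y_k$ inherits $k$-balancedness from $X_k^0$.

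\emph{The switches \eqref{eq:shift2}.} We argue by induction on $a$. Before step $a$, the only entries differing from $X_k^0$ are those touched by the earlier steps $a'<a$, namely
\[
 (a',a'),\ (a'+2,2k-a')\ \text{(set to $0$)},\qquad (a',2k-a'),\ (a'+2,a')\ \text{(set to $1$)}.
\]
These lie in rows $a'$ and $a'+2$ and in columns $a'$ and $2k-a'$. Step $a$ inspects the entries $(a,a)$, $(a+2,2k-a)$, $(a,2k-a)$ and $(a+2,a)$.
\begin{itemize}
\item Row $a+2$ equals a modified row only if $a'=a$ or $a'=a+2$, and both are excluded by $a'<a$.
\item Row $a$ equals a modified row only for $a'=a-2$ (via row $a'+2$). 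Those modified columns are $a-2$ and $2k-a+2$, and neither equals $a$ or $2k-a$, because $a\le k-3$.
\end{itemize}
So all four inspected entries still agree with $X_k^0$, and we read them off from \eqref{eq:x0}:
\begin{itemize}
\item $x_{aa}=1$, since $a<k$;
\item $x_{a+2,2k-a}=1$, since the index sum is $2k+2>2k$;
\item $x_{a,2k-a}=0$, since the index sum is exactly $2k$ and $a\ne 2k-a$;
\item $x_{a+2,a}=0$, since the index sum is $2a+2\le 2k-4$ and the entry is off-diagonal.
\end{itemize}
Thus every step of \eqref{eq:shift2} is legal.

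\emph{The switch $S_1$.} For $S_1=S(2,k-1;0,k-1)$ we need $x_{2,0}=1$, $x_{k-1,k-1}=1$, $x_{2,k-1}=0$ and $x_{k-1,0}=0$ in the matrix produced by \eqref{eq:shift2}.
\begin{itemize}
\item $x_{2,0}$: the step $a=0$ set $(2,0)$ to $1$.
\item $x_{k-1,k-1}$: row $k-1$ was modified only by $a'=k-3$, in columns $k-3$ and $k+3$. Row $a'=k-1$ would require $a'>k-3$. So this entry keeps its value $1$ from $X_k^0$, as it is diagonal with $k-1<k$.
\item $x_{2,k-1}$: row $2$ was modified only in columns $0$, $2k$ (from $a'=0$) and $2$, $2k-2$ (from $a'=2$). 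None of these equals $k-1$ when $k\ge15$. So the entry is its original value, which is $0$ because the index sum is $k+1\le 2k$ and it is off-diagonal.
\item $x_{k-1,0}$: the modified columns $k-3$ and $k+3$ of row $k-1$ do not include $0$. So the entry keeps its original value $0$, the index sum being $k-1<2k$.
\end{itemize}
Hence $S_1$ is legal.

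The only delicate point is the bookkeeping of collisions between earlier modified entries and the entries each step inspects, particularly the overlap of row $a$ with row $(a-2)+2$ and of row $k-1$ with row $(k-3)+2$. The hypothesis $k\ge15$ (in fact much less suffices) ensures that the relevant column indices are distinct.
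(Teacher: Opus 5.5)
Your proposal is correct and follows essentially the same route as the paper: for each switch you check the four entries against $X_k^0$ after ruling out interference from earlier switches, the only possible overlap being row $a$ with row $(a-2)+2$, and you then do the same for $S_1$. Your explicit list of the modified columns in rows $2$ and $k-1$ is slightly more detailed than the paper's, which simply asserts that the positions $(2,k-1)$ and $(k-1,0)$ are never touched.
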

\begin{proof}
Fix $a\in\{0,\ldots,k-3\}$ and consider the moment immediately before the switch $S(a,a+2;a,2k-a)$.  In $X_k^0$ one has $x_{a,a}=1$ because $a<k$, and this diagonal entry has not been touched by any earlier switch.  Also
\[
 (a+2)+(2k-a)=2k+2>2k,
\]
so $x_{a+2,2k-a}=1$ in $X_k^0$; row $a+2$ has not appeared in any earlier switch, hence this entry is still $1$.  On the other hand, $x_{a,2k-a}=0$ initially because the two indices sum to exactly $2k$, and the only earlier switch that can involve row $a$ is the one with parameter $a-2$, which changes columns $a-2$ and $2k-a+2$ only.  Finally,
\[
 (a+2)+a=2a+2\le2k-4<2k,
\]
so $x_{a+2,a}=0$, again untouched earlier.  Thus the required $2\times2$ pattern is exactly
\[
 \begin{pmatrix}1&0\\0&1\end{pmatrix}
\]
with ones at $(a,a)$ and $(a+2,2k-a)$, and the switch is legal.

After all switches in \eqref{eq:shift2}, the entry $(2,0)$ equals $1$ (it was inserted by the $a=0$ switch), while $(k-1,k-1)$ is still the original diagonal $1$.  The crossed entries $(2,k-1)$ and $(k-1,0)$ remain $0$: none of the switches in \eqref{eq:shift2} changes either position.  Hence $S_1=S(2,k-1;0,k-1)$ is legal as well.  Every matrix $2$-switch preserves all row and column sums, so $Y_k$ is $k$-balanced.
\end{proof}

\begin{lemma}[exact spectrum of $Y_k$]\label{lem:Yspectrum}
For $0\le i\le k$, the triangle-degrees with these lower indices are
\begin{center}
\begin{tabular}{c|c|c}
$i$ & $a_i(Y_k)$ & $b_i(Y_k)$\\ \hline
$0$ & $k^2-1$ & $k^2-k-1$\\
$1$ & $k^2+2k-6$ & $L_k(1)+2$\\
$2$ & $k^2+3k-15$ & $L_k(2)+2$\\
$3\le i\le k-3$ & $L_k(i)$ & $L_k(i)+2$\\
$k-2$ & $k^2+k-7$ & $k^2+k-4$\\
$k-1$ & $k^2-3$ & $k^2-4$\\
$k$ & $k^2-k$ & $k^2-k$
\end{tabular}
\end{center}
where the rows $i=1,2$ agree with the displayed bulk formula only on the $B$-side.

For high indices write $i=2k-s$, $0\le s\le k-1$.  Then
\begin{center}
\begin{tabular}{c|c|c}
$s$ & $a_{2k-s}(Y_k)$ & $b_{2k-s}(Y_k)$\\ \hline
$0$ & $H_k(s)$ & $H_k(s)+2$\\
$1$ & $H_k(s)+1$ & $H_k(s)+3$\\
$2$ & $H_k(s)+2$ & $H_k(s)+4$\\
$3\le s\le k-4$ & $H_k(s)+3$ & $H_k(s)+5$\\
$k-3$ & $H_k(s)+3$ & $H_k(s)+4$\\
$k-2$ & $H_k(s)+3$ & $H_k(s)+2$\\
$k-1$ & $H_k(s)+1$ & $H_k(s)$
\end{tabular}
\end{center}
\end{lemma}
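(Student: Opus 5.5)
We compute the spectrum of $Y_k$ directly from the explicit row and column supports rather than tracking the variation formula \eqref{eq:deltaA}--\eqref{eq:deltaB} through all $k-1$ switches. Each switch $S(a,a+2;a,2k-a)$ moves one entry in row $a$ from column $a$ to column $2k-a$, and one entry in row $a+2$ from column $2k-a$ to column $a$. So the final row supports are easy to write down.

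\emph{Row supports of $Y_k$.} For $i<k$ with $i\notin\{0,1,k-2,k-1\}$, row $i$ is $\{i-2\}\cup\{2k-i\}\cup\{2k-i+1,\dots,2k\}$. It gained column $i-2$ and lost the diagonal when it was the upper row of its own switch and the lower row of the switch with parameter $i-2$. The edge rows $0,1,2,k-2,k-1$ and the high rows $2k-s$ with $s\le 2$ are modified slightly, and $S_1$ further alters rows $2,k-1$ and columns $0,k-1$. The column supports follow by the symmetric bookkeeping.

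\emph{Degrees of $A_i$.} Using Lemma~\ref{lem:tdformula}, write $a_i$ as $\tfrac12(C_k^3)_{ii}$ (unchanged from Lemma~\ref{lem:base}) plus
\[
\sum_{j\in N_{C_k}(i)}|R_i\cap R_j|+e_{C_k}(R_i).
\]
Here $e_{C_k}(R_i)$ is the number of pairs $x,y\in R_i$ with $x+y<2k$. I would compute each term as a correction to the $X_k^0$ value.

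For a bulk low index $i$, replacing the diagonal $i$ by $2k-i$ and adding $i-2$ changes $e_{C_k}(R_i)$ only through pairs involving $i-2$. The only such pair is $(i-2,2k-i+1)$, since $i-2+2k-i+1<2k$. The intersections $|R_i\cap R_j|$ for $C_k$-neighbours $j$ change by $\pm1$ according to whether $j$'s modified entries are $i-2$ or $2k-i$. The bookkeeping should yield a net $0$ on the $A$-side and $+2$ on the $B$-side. The analogous computation for $i=2k-s$ gives the uniform shifts $+3$ and $+5$: here $R_i$ is unchanged except through neighbouring rows, so only the intersection and $C_k$-edge counts among columns move.

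\emph{Organisation.} I would organise this by computing, for every index, the three quantities $\tfrac12(C_k^3)_{ii}$, the mixed sum, and the inner-edge count. Each is recorded as a base value plus a correction, and these corrections are the "support formulas and correction table" promised in the appendices.

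\emph{Boundary rows.} Indices $0,1,2$, $k-2$, $k-1$, $k$ and $s\in\{0,1,2,k-3,k-2,k-1\}$ are handled individually. Each correction involves a bounded number of affected entries, so it is a constant or a linear term in $k$. The final values are then simplified against $L_k$, $M_k$ and $H_k$; for example, $L_k(0)=(k-1)^2$ plus a correction of $2k-2$ gives $k^2-1$.

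\emph{Main obstacle.} The hard part is the mixed term $\sum_{j\in N_{C_k}(i)}|R_i\cap R_j|$ in the bulk ranges. Every row $j$ has been perturbed, so one must check that the gains and losses telescope to a constant independent of $i$ and $k$. I would first verify this by tabulating, for a column $c$, which rows $j$ gain or lose $c$: rows $c+2$ and $2k-c-2$ gain $c$ and $2k-c$ respectively. I would then intersect these with $N_{C_k}(i)=\{j:j<2k-i,\,j\ne i\}$, with the boundary $j$ near $2k-i$ producing the exceptional small-$s$ and large-$s$ cases.

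Since $k\ge15$, the boundary perturbations (indices within $3$ of $0$, $k$, $2k$) do not interact. So the uniform formulas hold on the stated middle ranges, and the single switch $S_1$ contributes only localized corrections to rows $0,1,2,k-2,k-1$ and the corresponding columns.
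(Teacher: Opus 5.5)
Your plan is the paper's own: substitute the explicit row supports $R_i$ and column supports $Q_i$ of $Y_k$ into $\td_{C_k}(i)+\sum_{j\in N_{C_k}(i)}|R_i\cap R_j|+e_{C_k}(R_i)$. But this lemma consists of nothing except that bookkeeping, and the bookkeeping you write down is wrong in three places.

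\emph{The bulk row supports.} For $3\le i\le k-3$, row $i$ is the lower row of the switch with parameter $i-2$. That switch inserts $(i,i-2)$ and also \emph{deletes} $(i,2k-i+2)$. The correct support is therefore $R_i=\{i-2\}\cup(\{2k-i,\ldots,2k\}\setminus\{2k-i+2\})$. Your set has $i+2$ elements, which contradicts the margin $d_i=i+1$ guaranteed by Lemma~\ref{lem:referencelegal}.

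\emph{The internal edge count.} $e_{C_k}(R_i)=2$, not $1$: the pair $(i-2,2k-i)$, with sum $2k-2$, is a $C_k$-edge as well as $(i-2,2k-i+1)$.

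\emph{The gain table.} It is row $c$, not row $2k-c-2$, that gains column $2k-c$. Rows with index $\ge k$ are never touched, so neither ``every row $j$ has been perturbed'' nor ``the high rows with $s\le2$ are modified'' is true. On the high side it is the columns that change: $Q_{2k-s}=(\{s+1,\ldots,2k\}\setminus\{s+2\})\cup\{s\}$ for $0\le s\le k-3$.

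With the correct data, the $A$-side value $L_k(i)$ comes from an exact cancellation, so both terms must be right. For $j<k$, relative to $X_k^0$, the overlap $|R_i\cap R_j|$ changes by $[j\le1]-[\,|i-j|\in\{1,2\}\,]$. The gains come from $R_0=\{2k\}$ and $R_1=\{2k-1,2k\}$; the losses come from the deleted entries $2k-i+2$ and $2k-j+2$. Rows $j\ge k$ contribute no change. Hence the mixed sum drops by exactly $2$, which cancels $e_{C_k}(R_i)=2$; an undercount of the edge term by one cannot be absorbed.

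The $B$-side needs $Q_i=\{i+2\}\cup\{2k-i+1,\ldots,2k\}$, $e_{C_k}(Q_i)=0$, and a mixed gain of exactly $+2$. That gain comes from $Q_{2k-i-1}$ and $Q_{2k-i-2}$, with $Q_{k-2}$ (for $i=k-4$) or $Q_0=\{k-1\}$ (for $i=k-3$) replacing $Q_{2k-i-2}$. The high bulk needs the gains $+1$ each from $R_0,R_1,R_2=\{k-1,2k-2,2k-1\}$, and the two extra $C_k$-edges created in $Q_{2k-s}$.

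Your proposal asserts all of these outcomes (``should yield'') and only promises the boundary table. As written it is a plan whose stated inputs would produce wrong values, not a proof of the lemma.
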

\begin{proof}
Apply the combinatorial form of Lemma~\ref{lem:tdformula}, displayed in Appendix~A, to the explicit row supports $R_i$ and column supports $Q_i$.  All required sets are integer intervals with at most two exceptional points.  We record the two bulk calculations, since they account for all indices away from the boundary.

For $3\le i\le k-3$, direct substitution gives
\begin{align*}
 \td_{C_k}(i)&=k^2-2k+1+\frac{i-i^2}{2},\\
 \sum_{j\in N_{C_k}(i)}|R_i\cap R_j|&=\frac{i(4k-3i-3)-4}{2},
 &e_{C_k}(R_i)=2,\\
 \sum_{j\in N_{C_k}(i)}|Q_i\cap Q_j|&=\frac{i(4k-3i-3)+4}{2},
 &e_{C_k}(Q_i)=0.
\end{align*}
Hence $a_i(Y_k)=L_k(i)$ and $b_i(Y_k)=L_k(i)+2$.

For a high bulk index $i=2k-s$ with $3\le s\le k-4$, one has
\[
 N_{C_k}(i)=\{0,\ldots,s-1\},\qquad R_i=\{s+1,\ldots,2k\}.
\]
The mixed sum of row intersections is
\begin{align*}
 \sum_{j\in N_{C_k}(i)}|R_i\cap R_j|
 &=1+2+3+\sum_{j=3}^{s-1}j\\
 &=\binom{s}{2}+3.
\end{align*}
Here the first three terms come from $j=0,1,2$; for $3\le j\le s-1$ the final support formula in Appendix~A gives $|R_i\cap R_j|=j$.  Likewise,
\[
 Q_i=(\{s+1,\ldots,2k\}\setminus\{s+2\})\cup\{s\}.
\]
For $j=0$ the intersection $Q_i\cap Q_0$ has size $1$; for $1\le j\le s-1$ the interval part contributes $j$, and the extra point $j+2$ belongs to $Q_i$ exactly for $j=s-2,s-1$.  Hence
\[
 \sum_{j\in N_{C_k}(i)}|Q_i\cap Q_j|=\binom{s}{2}+3.
\]
Finally,
\[
 \td_{C_k}(i)=\binom{s}{2},\qquad
 e_{C_k}(R_i)=(k-1-s)^2,
\]
whereas $Q_i=(R_i\setminus\{s+2\})\cup\{s\}$ and the new vertex $s$ has exactly two more $C_k$-neighbors in $R_i\setminus\{s+2\}$ than the removed vertex $s+2$; hence replacing $R_i$ by $Q_i$ creates exactly two additional $C_k$-edges, so
\[
 e_{C_k}(Q_i)=(k-1-s)^2+2.
\]
Therefore
\begin{align*}
 a_{2k-s}(Y_k)
 &=\binom{s}{2}+\left(\binom{s}{2}+3\right)+(k-1-s)^2
 =H_k(s)+3,\\
 b_{2k-s}(Y_k)
 &=\binom{s}{2}+\left(\binom{s}{2}+3\right)+(k-1-s)^2+2
 =H_k(s)+5.
\end{align*}  The finitely many boundary indices $0,1,2,k-2,k-1,k$ and $s=0,1,2,k-3,k-2,k-1$ are obtained by the same direct substitutions; their exact corrections are collected in Appendix~B.  Adding those corrections to Lemma~\ref{lem:base} yields precisely the two displayed tables.
\end{proof}

\begin{proposition}[the three defects]\label{prop:three}
For every $k\ge15$, the only equal triangle-degrees in $\Gamma_k(Y_k)$ are the following three pairs.

If $k=2h$,
\[
 a_{h-1}=b_{h+1},\qquad a_k=b_k,\qquad
 a_{3h-1}=b_{3h+1}.
\]
If $k=2h+1$,
\[
 a_{h+1}=b_{h-1},\qquad a_k=b_k,\qquad
 a_{3h+3}=b_{3h+1}.
\]
\end{proposition}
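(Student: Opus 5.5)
The proof will work entirely from the explicit spectrum in Lemma~\ref{lem:Yspectrum}, reading all $2(2k+1)$ values as quadratics in $k$ and the index. First I split the vertices into a \emph{low family}, the indices $0\le i\le k-1$, whose values are $L_k(i)$ plus a correction in $\{0,2\}$ apart from the boundary rows, and a \emph{high family}, the indices $s=2k-i$ with $0\le s\le k-1$, whose values are $H_k(s)$ plus a correction in $\{0,\dots,5\}$. The middle pair $a_k=b_k=k^2-k$ is one of the claimed collisions, so it needs no further work.

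The key step is separating the ranges. $L_k$ is concave with vertex near $i=(2k-1)/4$. On $1\le i\le k-1$ it satisfies $L_k(i)\ge L_k(k-1)=k^2-2$, and with the corrections every low value except $b_0=k^2-k-1$ is at least $k^2-4$. On the high side, $H_k$ is convex with minimum near $s=(2k-1)/4$, and its maximum is $H_k(0)=(k-1)^2$. Hence every high value is at most $(k-1)^2+2$, which is below $k^2-k-1$ once $k\ge4$.

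This gives three checks. High values never meet low values, and neither family meets $k^2-k$. Within the high family, two values are equal only in the pattern described below. Within the low family, I will check the boundary entries $a_0,a_1,a_2,a_{k-2},a_{k-1},b_0,b_{k-2},b_{k-1}$ one at a time against the bulk.

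Inside each family the comparisons reduce to simple equations. For $i\ne j$,
\[L_k(i)-L_k(j)=(i-j)(2k-1-2(i+j)).\]
This has odd second factor, so it is never $0$ or $\pm2$ unless $|i-j|=1$, and even then $2k-1-2(i+j)=\pm1$ or $\pm2$ is impossible except for $\pm1$. Since $(i-j)$ times an odd number must equal the correction difference $\pm2$, we need $i-j=\pm2$ and $2k-1-2(i+j)=\pm1$, that is, $i+j=k$ or $i+j=k-1$. Take $a_i=L_k(i)$ and $b_j=L_k(j)+2$, so that $L_k(i)-L_k(j)=2$.
\begin{itemize}
\item If $k=2h$, this forces $i+j=2h-1$ with $j=i+2$ after checking signs, which contradicts parity. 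The correct choice of sign gives $i+j=k$, $j=i+2$, so $i=h-1$ and $j=h+1$.
\item If $k=2h+1$, the same reasoning gives $i+j=k-1$, hence $i=h+1$ and $j=h-1$.
\end{itemize}
Same-side comparisons, $a$ with $a$ or $b$ with $b$, would need the difference to be $0$, which the identity rules out.

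The high family works the same way:
\[H_k(s)-H_k(t)=(s-t)(2(s+t)-(2k-1)),\]
again with an odd second factor. The bulk corrections are $3$ and $5$, so a collision requires a difference of $2$ and yields $s+t\in\{k,k-1\}$ with $|s-t|=2$. Translating back through $i=2k-s$ gives $a_{3h-1}=b_{3h+1}$ for even $k$ and $a_{3h+3}=b_{3h+1}$ for odd $k$. I must confirm that the indices lie in the bulk range $3\le s\le k-4$, which holds for $k\ge15$, where $s\approx k/2$.

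The main obstacle is the boundary rows. These are the low entries $a_0,a_1,a_2,a_{k-2},a_{k-1}$ and $b_0,b_{k-2},b_{k-1}$, and the high entries with $s\in\{0,1,2,k-3,k-2,k-1\}$. Their corrections differ from the bulk, so a value such as $k^2+3k-15$ might coincide with some $L_k(j)$ or $L_k(j)+2$.

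For each such entry I will set up the equation "boundary value $=$ bulk formula at $j$". This is a quadratic in $j$ with discriminant quadratic in $k$. I will show it has no integer solution in range, either by reducing modulo $2$ or $4$, or by trapping the discriminant strictly between consecutive squares for $k\ge15$. A value near the ends of the range, such as $k^2-3$ or $k^2-4$, is compared directly with the small number of bulk values in the interval $[k^2-4,k^2]$, which come from $j$ near $0$ or $k-1$ by concavity.

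A further point is that entries near the parabola's vertex, for example $b_0$-type values close to $L_k$ at two symmetric indices, require care with the $\pm1$ offset. That is where I expect most of the casework to lie.
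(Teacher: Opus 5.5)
Your separation of the low and high spectra, and your bulk analysis through the odd-factor identities for $L_k$ and $H_k$, are correct and are exactly the paper's argument. The colliding pairs you extract, and the check that they lie in the bulk ranges, are right. There are a few harmless slips:
\begin{itemize}
\item $L_k(k-1)=k(k-1)$, not $k^2-2$; the value $k^2-2$ is $L_k(1)$.
\item Rows $k-2$ and $k-1$ do not follow $L_k$ anyway, so the bound ``every low value other than $b_0$ is at least $k^2-4$'' must be read off the table; the minimum there is $b_{k-1}=k^2-4$.
\item $k^2-2k+3<k^2-k-1$ needs $k\ge5$, not $k\ge4$.
\item The high upper bound has to account for corrections up to $5$, not just the maximum of $H_k$.
\end{itemize}

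\textbf{The gap is in the exceptional rows}, the step you yourself call the main obstacle. There you give only a promise (discriminants or congruences for each exceptional value against each bulk formula). You also never mention comparisons among the exceptional values themselves, although several differ by $1$ or $2$: for example $b_{k-1}=k^2-4$, $a_{k-1}=k^2-3$, $a_0=k^2-1$, $b_1=k^2$; and, writing $A^H_s=a_{2k-s}$, $B^H_s=b_{2k-s}$, the pair $A^H_{k-1}=k^2-3k+3$, $B^H_{k-1}=k^2-3k+2$.

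The missing idea is that no exceptional value can meet a bulk value for size reasons, so no quadratic needs to be solved.

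\emph{Low side.} The exceptional values, including $b_1,b_2$, which you counted as bulk, are ordered as
\[
b_0<a_k=b_k<b_{k-1}<a_{k-1}<a_0<b_1<a_{k-2}<b_{k-2}<b_2<a_1<a_2,
\]
with successive gaps $1,k-4,1,2,1,k-7,3,k-3,1,k-9$. The largest of them, $a_2=k^2+3k-15$, is one less than $L_k(k-3)$. By concavity and $L_k(3)-L_k(k-3)=k-6>0$, the value $L_k(k-3)$ is $\min_{3\le i\le k-3}L_k(i)$, so every bulk value lies above every exceptional value.

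\emph{High side.} The twelve exceptional values form the chain
\[
B^H_0>A^H_0>A^H_{k-1}>B^H_{k-1}>B^H_1>A^H_1>A^H_{k-2}>B^H_{k-2}>B^H_2>A^H_2>B^H_{k-3}>A^H_{k-3}>B^H_3,
\]
with gaps $2,k-2,1,k-5,2,k-5,1,k-6,2,k-7,1,k-8$. By convexity, $H_k(s)\le H_k(3)-(k-7)$ for $4\le s\le k-4$, so the remaining bulk lies strictly below $A^H_3=B^H_3-2$.

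With these two chains, your bulk argument finishes the proof.

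Your worry about values ``near the vertex'' is misplaced. The vertex is where $L_k$ is largest, about $3k^2/2$, whereas every exceptional low value lies within $3k$ of $k^2$, near the endpoint minima. The discriminant plan could be made to work; for instance $a_1=L_k(j)$ leads to the discriminant $(2k-9)^2-24$, which is not a square for $k\ge11$. But it is far more casework than needed, and by itself it never handles the exceptional-versus-exceptional comparisons.
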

\begin{proof}
The least value in the low part is $b_0=k^2-k-1$, while the greatest value in the high part is $b_{2k}=k^2-2k+3$.  Their difference is $k-4>0$, so low and high indices never collide.

For low indices the exceptional values satisfy
\begin{align*}
 b_0&<a_k=b_k<b_{k-1}<a_{k-1}<a_0<b_1<a_{k-2}\\
 &<b_{k-2}<b_2<a_1<a_2<a_{k-3}.
\end{align*}
Every successive difference is one of
\[
 1,\ k-4,\ 1,\ 2,\ 1,\ k-7,\ 3,\ k-3,\ 1,\ k-9,\ 1,
\]
which is positive for $k\ge15$.  The remaining low bulk indices are $3\le i\le k-4$.  Since $L_k$ is concave, its minimum on this interval is attained at an endpoint; moreover
\[
 L_k(3)-L_k(k-3)=k-6>0,
 \qquad
 L_k(k-4)-L_k(k-3)=2k-13>0.
\]
Both endpoint differences exceed $2$ for $k\ge15$.  Since $b_{k-3}=a_{k-3}+2$, it follows that
\[
 a_{k-3}<b_{k-3}<L_k(i)\qquad(3\le i\le k-4).
\]
Thus $b_{k-3}$ is also distinct from every preceding exceptional value and from every remaining bulk value; the corresponding $B$-bulk values are larger than their $A$-side partners by $2$.  Inside the remaining bulk, values of the same type are distinct because
\begin{equation}\label{eq:Ldiff}
 L_k(i)-L_k(j)=(i-j)(2k-1-2i-2j),
\end{equation}
and the second factor is odd.  A cross equality is equivalent to
\[
 (i-j)(2k-1-2i-2j)=2.
\]
Since the second factor is odd, the only possibilities are $(i-j,2k-1-2i-2j)=(2,1)$ or $(-2,-1)$.  The first gives $i+j=k-1$ and hence, only for odd $k=2h+1$, $(i,j)=(h+1,h-1)$.  The second gives $i+j=k$ and hence, only for even $k=2h$, $(i,j)=(h-1,h+1)$.  These are exactly the stated low pairs.

For high indices set $A_s^H=a_{2k-s}$ and $B_s^H=b_{2k-s}$.  The exceptional values satisfy
\begin{align*}
 B_0^H&>A_0^H>A_{k-1}^H>B_{k-1}^H>B_1^H>A_1^H>A_{k-2}^H\\
 &>B_{k-2}^H>B_2^H>A_2^H>B_{k-3}^H>A_{k-3}^H>B_3^H.
\end{align*}
The successive positive differences are
\[
2,\ k-2,\ 1,\ k-5,\ 2,\ k-5,\ 1,\ k-6,\ 2,\ k-7,\ 1,\ k-8.
\]
The remaining high bulk indices satisfy $4\le s\le k-4$.  Since $H_k$ is convex, its maximum on this interval is attained at an endpoint.  Comparing with the last displayed boundary value $B_3^H=H_k(3)+5$ gives
\[
 B_3^H-(H_k(4)+5)=H_k(3)-H_k(4)=2k-15>0,
\]
while
\[
 B_3^H-(H_k(k-4)+5)=H_k(3)-H_k(k-4)=k-7>0.
\]
For $k\ge15$ both differences are greater than $2$.  Since
\[
 A_3^H=B_3^H-2,
\]
every remaining high bulk value lies strictly below $A_3^H$, while every preceding value in the boundary chain lies strictly above $B_3^H$.  Hence $A_3^H$ is distinct from all other high values.  In the remaining bulk,
\begin{equation}\label{eq:Hdiff}
 H_k(s)-H_k(t)=(s-t)(2s+2t-(2k-1)).
\end{equation}
For a cross equality $H_k(s)+3=H_k(t)+5$, equation~\eqref{eq:Hdiff} again has product $2$ with an odd second factor.  The possibilities are $(s-t,2s+2t-(2k-1))=(2,1)$ or $(-2,-1)$.  The first yields $(s,t)=(h+1,h-1)$ when $k=2h$, and the second yields $(s,t)=(h-1,h+1)$ when $k=2h+1$.  Translating back from $s=2k-i$ gives exactly the stated high pairs.  Together with $a_k=b_k$, these are all collisions.
\end{proof}

\section{Odd parameters}
Let
\[
 k=2h+1,\qquad h\ge8.
\]
Starting from $Y_k$, apply
\begin{equation}\label{eq:S2odd}
 S_2^{\rm o}=S(h+1,3h+3;h-1,2h+1)
\end{equation}
and call the result $Z_k^{\rm o}$.  The switch is legal by the support description in Appendix~A.

\begin{lemma}[odd variation under the second switch]\label{lem:oddvariation}
Relative to $Y_k$, the only changes are
\begin{align*}
 a_{h+1}&\mapsto a_{h+1}+h-3,&
 a_i&\mapsto a_i+1 &&(2h+2\le i\le3h),\\
 a_{3h+3}&\mapsto a_{3h+3}+h+1,\\
 b_{h-1}&\mapsto b_{h-1}+2h,&
 b_{2h+1}&\mapsto b_{2h+1}-2,&
 b_i&\mapsto b_i+1 &&(2h+2\le i\le3h).
\end{align*}
All other triangle-degrees are unchanged.
\end{lemma}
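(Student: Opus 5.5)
The plan is to apply Lemma~\ref{lem:variation} directly with $X=Y_k$, $r=h+1$, $s=3h+3$, $c=h-1$, $d=2h+1$. Thus $p=e_{h+1}-e_{3h+3}$ and $q=e_{2h+1}-e_{h-1}$. Since $p$ and $q$ each have two nonzero entries, every term in \eqref{eq:deltaA} and \eqref{eq:deltaB} reduces to a few entries of $C_k$, $Y_k$, $C_kY_k$ and $Y_kC_k$. I will read these off from the support lists $R_i$, $Q_i$ of Appendix~A. Before starting, I will recheck legality there: $(h+1,h-1)$ and $(3h+3,2h+1)$ must be ones, and $(h+1,2h+1)$ and $(3h+3,h-1)$ must be zeros.

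First I compute the vector pieces. The vector $C_kp$ has entries $(C_kp)_i=[i\ne h+1,\ i+h+1<2k]-[i\ne 3h+3,\ i+3h+3<2k]$. Since $2k=4h+2$, this is $+1$ exactly on the interval $h-1\le i\le 3h$ (excluding $h+1$) and $0$ elsewhere, up to the diagonal boundary effects, which I will track. Similarly, $(C_kq)_i=[i+2h+1<4h+2]-[i+h-1<4h+2]$, adjusted for the diagonals, is $-1$ on $2h+1\le i\le 3h+2$ and roughly $0$ elsewhere. The scalars are $q^\top C_kq=-2(C_k)_{h-1,2h+1}=-2$ and $p^\top C_kp=-2(C_k)_{h+1,3h+3}=0$, since $(h+1)+(3h+3)=4h+4\ge 2k$. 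Next, $Xq$ is the difference of columns $2h+1$ and $h-1$ of $Y_k$, and $X^\top p$ is the difference of rows $h+1$ and $3h+3$. Both are computed from Appendix~A as near-interval indicator vectors.

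Then I split the indices $i$. If $i\notin\{h+1,3h+3\}$, we have $p_i=0$, and \eqref{eq:deltaA} collapses to $(C_kp)_i(Xq)_i$. So $a_i$ changes only where $C_kp$ and $Xq$ have overlapping support. I expect this overlap to be exactly $2h+2\le i\le 3h$ with product $+1$, giving the claimed $+1$ shift. For $i=h+1$ and $i=3h+3$ I evaluate all five terms. Here $p_i^2=1$ contributes $-1$, and the terms $(C_kXq)_i$ and $(XC_kq)_i$ are counts of common neighbours. These counts produce the linear-in-$h$ increments $h-3$ and $h+1$. The $B$-side works the same way. For $i\notin\{h-1,2h+1\}$ the change is $(C_kq)_i(X^\top p)_i$, which should give $+1$ on $2h+2\le i\le 3h$. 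The two endpoint indices are computed in full and should give $+2h$ and $-2$.

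The main obstacle is the bookkeeping at the exceptional points. The support sets of $Y_k$ differ from intervals by the shifted diagonal entries introduced in \eqref{eq:shift2}. In particular, $Q_i$ contains $i-2$ in place of $i+2$, and I must check that none of these exceptions falls inside the relevant windows in a way that would add isolated $\pm1$ changes. I would handle this by writing each needed column and row as "interval $\pm$ at most two points" from Appendix~A. Because $h\ge8$, the windows $[h-1,3h+3]$ stay away from the boundary indices $0,1,2,k-2,k-1$ and $s\le 2$, and I would verify that each exceptional point either lies outside the window or cancels between the two terms of the difference vector.
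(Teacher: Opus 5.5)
Your proposal is correct and is essentially the paper's proof: you substitute $p=e_{h+1}-e_{3h+3}$ and $q=e_{2h+1}-e_{h-1}$ into Lemma~\ref{lem:variation}, use that $C_kp$ and $C_kq$ are interval indicators (with $(C_kq)_{h-1}=+1$ feeding the endpoint constants), and read off rows $h+1,3h+3$ and columns $h-1,2h+1$ of $Y_k$. The constants $h-3$, $h+1$, $2h$, $-2$ and the $+1$ bands on $[2h+2,3h]$ all check out; one small correction is that the window $[h-1,3h+3]$ does contain the boundary indices $k-2=2h-1$ and $k-1=2h$, which is harmless because only those four rows and columns of $Y_k$ enter the computation and none of them carries a boundary exception.
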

\begin{proof}
Substitute \eqref{eq:S2odd} into Equations~\eqref{eq:deltaA} and~\eqref{eq:deltaB}.  Since $C_k$ is a threshold matrix, every entry of $C_kp$ and $C_kq$ is an interval indicator; the displayed ranges are precisely their supports.  Direct evaluation gives the listed constants.
\end{proof}

\begin{lemma}[odd separation for low indices]\label{lem:oddlow}
The triangle-degrees of $Z_k^{\rm o}$ with low indices are pairwise distinct.
\end{lemma}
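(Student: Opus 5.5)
The plan is to reduce everything to Proposition~\ref{prop:three} and Lemma~\ref{lem:oddvariation}. By Proposition~\ref{prop:three}, with $k=2h+1$, the low-index triangle-degrees of $Y_k$ are pairwise distinct except for the two coincidences $a_{h+1}=b_{h-1}$ and $a_k=b_k$. Among low indices ($0\le i\le k$), the switch $S_2^{\rm o}$ changes exactly three values. These are $a_{h+1}$ (by $+(h-3)$), $b_{h-1}$ (by $+2h$), and $b_{2h+1}=b_k$ (by $-2$). The indices $2h+2\le i\le 3h$ and $3h+3$ are high, so they do not enter this lemma. It therefore suffices to show two things. First, the three modified values differ from each other. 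Second, each of them differs from every unchanged low value, since the unchanged low values are already pairwise distinct.

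I will handle $b_k$ first. By Lemma~\ref{lem:Yspectrum} its new value is $k^2-k-2=b_0-1$. The chain in the proof of Proposition~\ref{prop:three} shows that $b_0=k^2-k-1$ is the least low value of $Y_k$. The bulk values lie above $a_{k-3}$, which is the top of that chain. Hence the new $b_k$ lies strictly below every unchanged low value. The two other modified values will be shown to be large, so they cannot equal it either.

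For the other two values I will compute explicitly with $k=2h+1$, using $L_k(i)=4h^2+(4h+1)i-2i^2$. This gives
\[
L_k(h-1)=6h^2+h-3,\qquad L_k(h)=6h^2+h,\qquad L_k(h+1)=6h^2+h-1.
\]
The vertex of the concave quadratic $L_k$ is at $i=h+\tfrac14$, so $L_k(h)$ is its maximum over all integers. Using $b_{h-1}=L_k(h-1)+2$ and $a_{h+1}=L_k(h+1)$, the new values are
\[
a_{h+1}'=6h^2+2h-4,\qquad b_{h-1}'=6h^2+3h-1,
\]
and these differ from each other by $h+3$.

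Every unchanged low value of $Y_k$ is at most $\max(L_k(h)+2,\ a_{k-3})$. The exceptional boundary values all lie at or below $a_{k-3}=L_k(k-3)$, which is itself below the bulk. Since $L_k(h)+2=6h^2+h+2$, the inequality $6h^2+2h-4>6h^2+h+2$ holds exactly when $h>6$. So for $h\ge8$ both $a_{h+1}'$ and $b_{h-1}'$ exceed every unchanged low value, and the lemma follows.

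The only step needing care is this maximum bound. One must confirm that no low boundary value from the table in Lemma~\ref{lem:Yspectrum} exceeds $L_k(h)+2$. The boundary values are all of the form $k^2+O(k)$, while $L_k(h)+2\approx\tfrac32k^2$, so a direct comparison with $k\ge17$ settles it.
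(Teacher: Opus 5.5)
Your proposal is correct and follows essentially the same route as the paper: the new $b_k=b_0(Y_k)-1$ becomes the unique minimum, and the two formerly colliding values $a_{h+1}$ and $b_{h-1}$ rise above the largest unchanged low value $b_h(Y_k)=L_k(h)+2=6h^2+h+2$ while differing from each other by $h+3$, with Proposition~\ref{prop:three} covering everything else. Your closing asymptotic remark is unnecessary, since the chain in Proposition~\ref{prop:three} together with $L_k(k-3)\le L_k(h)$ already bounds every boundary value.
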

\begin{proof}
The new value
\[
 b_k=4h^2+2h-2=b_0(Y_k)-1
\]
is the unique minimum.  Among the unchanged low values the maximum is
\[
 M_{\rm o}=b_h(Y_k)=6h^2+h+2.
\]
The two vertices involved in the former noncentral defect move to
\[
 a_{h+1}=M_{\rm o}+h-6,
 \qquad b_{h-1}=a_{h+1}+h+3.
\]
Both exceed $M_{\rm o}$ because $h\ge8$.  All other values were already distinct by Proposition~\ref{prop:three}.
\end{proof}

For high indices write $s=2k-i$ and
\[
 \mathcal H_{\rm o}(s)=4h^2-(4h+1)s+2s^2.
\]
After this switch, apart from the special vertex $A_{3h+3}$ corresponding to $s=h-1$, the correction bands are
\begin{align}
 D_3^{\rm o}&=[3,h-2]\cup\{h,h+1\} &&\text{($A$-side, correction $3$)},\notag\\
 D_4^{\rm o}&=[h+2,2h-1] &&\text{($A$-side, correction $4$)},\notag\\
 D_5^{\rm o}&=[3,h+1]\cup\{2h-2\} &&\text{($B$-side, correction $5$)},\label{eq:oddDomains}\\
 D_6^{\rm o}&=[h+2,2h-3] &&\text{($B$-side, correction $6$)}.\notag
\end{align}
The following boundary chain isolates the exceptional coordinates and an endpoint of the ordinary range.  Explicitly,
\begin{align*}
 B_0^H&>A_0^H>A_{2h}^H>B_{2h}^H>B_1^H>A_1^H>A_{2h-1}^H\\
 &\quad>B_{2h-1}^H>B_2^H>A_2^H>B_{2h-2}^H>A_{2h-2}^H>B_3^H,
\end{align*}
whose successive differences are
\[
2,\ 2(h-1),\ 1,\ 2h-3,\ 2,\ 2h-5,\ 1,\ 2(h-2),\ 2,\ 2h-7,\ 1,\ 2(h-3).
\]

\begin{lemma}[ordinary odd high spectrum]\label{lem:oddordinary}
All triangle-degrees of $Z_k^{\rm o}$ with high indices, other than that of $A_{3h+3}$, are pairwise distinct.
\end{lemma}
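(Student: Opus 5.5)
The approach is to reproduce, for $Z_k^{\rm o}$, the argument used in the high-index part of Proposition~\ref{prop:three}. First I rewrite $H_k$ in the odd parametrization. With $k=2h+1$ one has $H_k(s)=4h^2-(4h+1)s+2s^2=\mathcal H_{\rm o}(s)$. By Lemma~\ref{lem:Yspectrum} and Lemma~\ref{lem:oddvariation}, every high vertex other than $A_{3h+3}$ then has triangle-degree $\mathcal H_{\rm o}(s)+c$.

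The correction $c$ is determined as follows.
\begin{itemize}
\item For $s\in\{0,1,2,2h-2,2h-1,2h\}$, $c$ is the boundary correction from Lemma~\ref{lem:Yspectrum}.
\item For the bulk, $c$ is read from the bands \eqref{eq:oddDomains}. The variation $+1$ on $2h+2\le i\le 3h$ corresponds to $h\le s\le 2h$, and only the bulk part $h+2\le s\le 2h-1$ is relevant there.
\item The value $b_{2h+1}\mapsto b_{2h+1}-2$ is at $s=2h+1=k$. That index is the middle vertex, which is a low index, so it does not enter here.
\end{itemize}
I would first double-check this band bookkeeping, since the rest of the argument depends on it.

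Second, I use the displayed boundary chain. Its successive differences are positive for $h\ge8$. It separates the twelve exceptional coordinates $s\in\{0,1,2,2h-2,2h-1,2h\}$ on both sides from one another, and places all of them strictly above $B_3^H=\mathcal H_{\rm o}(3)+5$. Since $\mathcal H_{\rm o}$ is convex, the maximum of $\mathcal H_{\rm o}(s)+c$ over the remaining bulk $3\le s\le 2h-3$ (excluding the special $A$-vertex at $s=h-1$) is attained at an endpoint, with $c\le 6$. I would check that $\mathcal H_{\rm o}(4)+6$ and $\mathcal H_{\rm o}(2h-3)+6$ are below $B_3^H$, and that $A_3^H=\mathcal H_{\rm o}(3)+3$ exceeds every other bulk value. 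The relevant differences are $\mathcal H_{\rm o}(3)-\mathcal H_{\rm o}(4)=4h-13$ and $\mathcal H_{\rm o}(3)-\mathcal H_{\rm o}(2h-3)=2h-6$ (up to the correction offsets). Both are comfortably positive for $h\ge8$. This isolates the boundary values from the bulk, and makes $A_3^H$ and $B_3^H$ distinct from everything else.

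Third, within the bulk, use $\mathcal H_{\rm o}(s)-\mathcal H_{\rm o}(t)=(s-t)(2s+2t-(4h+1))$, whose second factor is odd. An equality $\mathcal H_{\rm o}(s)+c=\mathcal H_{\rm o}(t)+c'$ forces $(s-t)(2s+2t-4h-1)=c'-c$ with $|c'-c|\le 3$.
\begin{itemize}
\item For $c=c'$ this gives $s=t$, so values with the same correction are distinct.
\item For $|c'-c|\in\{1,3\}$, the factorizations have $s-t\in\{\pm1,\pm3\}$ and $2s+2t-4h-1\in\{\pm1,\pm3\}$, so $s+t\in\{2h-1,2h,2h+1,2h+2\}$. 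Each such candidate pair $(s,t)$ must then be checked against membership in the bands $D_3^{\rm o},\dots,D_6^{\rm o}$.
\item For $|c'-c|=2$, the odd factor is $\pm1$ and $s-t=\pm2$, giving $s+t\in\{2h,2h+1\}$. With $s-t=\pm2$ this pins $\{s,t\}=\{h-1,h+1\}$.
\end{itemize}

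The main obstacle is this case analysis: each surviving pair must be shown to fail the band constraint. The key case is the pair $\{h-1,h+1\}$ with $A$-correction $3$ and $B$-correction $5$. This is exactly the old defect $a_{3h+3}=b_{3h+1}$. It is removed because $s=h-1$ on the $A$-side is the excluded special vertex. The opposite orientation would need $B$-correction $5$ at $h-1$ against $A$-correction $3$ at $h+1$, and I would verify that the sign condition on $2s+2t-4h-1$ fails there. For the pairs with difference $1$ or $3$, I expect each candidate to land with $s$ or $t$ outside the required band. For example, correction $4$ lives only in $[h+2,2h-1]$ while $s+t\le2h+2$, and this forces the partner below $h$, where the available corrections have the wrong parity or size. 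I would tabulate these finitely many symbolic cases to finish the proof.
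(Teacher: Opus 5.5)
Your proposal is correct and is essentially the paper's argument. The odd-factor factorization $(s-t)(2s+2t-4h-1)=c'-c$ disposes of collisions inside the correction bands; the checks you defer do all close, leaving only the excluded pair $A_{3h+3}$, $B_{3h+1}$. The displayed boundary chain plus convexity then separates boundary from bulk, and your single bound $c\le 6$ on $[4,2h-3]$ is a slightly leaner substitute for the paper's band-by-band endpoint comparisons.

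One bookkeeping slip needs fixing. The $+1$ shift on $2h+2\le i\le 3h$ corresponds to $h+2\le s\le 2h$, not $h\le s\le 2h$, so it also raises the corrections at $s=2h-2,2h-1,2h$ above those of Lemma~\ref{lem:Yspectrum}. This is harmless only because you use the boundary chain for $Z_k^{\rm o}$ as displayed, which already includes that shift.
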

\begin{proof}
For the ordinary correction domains, put $s=h+x$.  The common quadratic part is
\[
 F(x)=2x^2-x+2h^2-h.
\]
The ordinary corrections are
\[
 c_A(x)=3+\mathbf1_{x\ge2},\qquad
 c_B(x)=5+\mathbf1_{x\ge2},
\]
with the single $A$-coordinate $x=-1$ removed.  Any equality reduces to
\begin{equation}\label{eq:oddsmall}
 (x-y)(2x+2y-1)=c_Q(y)-c_P(x),
\end{equation}
where the right-hand side has absolute value at most $3$.  Values within the same correction band are distinct because the second factor is odd.  For completeness, the possible ordered solutions involving two different correction bands are
\[
\begin{array}{c|c}
\text{bands compared}&(x,y)\\ \hline
A_3,A_4&(1,0)\\
B_5,B_6&(1,0)\\
A_3,B_5&(-1,1)\\
A_3,B_6&(-1,0),(2,-1)\\
A_4,B_5&(1,0)\\
A_4,B_6&(-1,1).
\end{array}
\]
This list follows by factoring the right-hand side of \eqref{eq:oddsmall}.  Direct inspection of the four domains in \eqref{eq:oddDomains} eliminates every entry except the formal equality
\[
 A(x=-1)=B(x=1),
\]
and $A(x=-1)$ is precisely the removed special coordinate.

It remains to exclude collisions between the displayed boundary chain and the remaining ordinary values.  The last value in the boundary chain is
\[
 B_3^H=\mathcal H_{\rm o}(3)+5.
\]
Every preceding boundary value is strictly larger than $B_3^H$.  We now show that every remaining ordinary value is strictly smaller than $B_3^H$.  On each correction domain the quadratic $\mathcal H_{\rm o}(s)$ is convex, so its maximum on an interval is attained at an endpoint.  For the four ordinary bands the only endpoint comparisons that are not immediate are
\begin{align*}
 B_3^H-(\mathcal H_{\rm o}(2h-3)+4)&=2h-5,\\
 B_3^H-(\mathcal H_{\rm o}(2h-3)+6)&=2h-7,\\
 B_3^H-(\mathcal H_{\rm o}(4)+5)&=4h-13,
\end{align*}
and, at the left endpoints of the right-hand bands,
\begin{align*}
 B_3^H-(\mathcal H_{\rm o}(h+2)+4)&=2h^2-11h+10,\\
 B_3^H-(\mathcal H_{\rm o}(h+2)+6)&=2h^2-11h+8.
\end{align*}
For the correction-$3$ $A$-band, convexity on $[3,h-2]$ and the two isolated points $h,h+1$ reduce the comparison to
\begin{align*}
 B_3^H-(\mathcal H_{\rm o}(3)+3)&=2,\\
 B_3^H-(\mathcal H_{\rm o}(h-2)+3)&=2h^2-11h+7,\\
 B_3^H-(\mathcal H_{\rm o}(h)+3)&=2h^2-11h+17,\\
 B_3^H-(\mathcal H_{\rm o}(h+1)+3)&=2h^2-11h+16.
\end{align*}
For the correction-$5$ $B$-band, after removing the boundary points $s=3$ and $s=2h-2$, the remaining interval is $[4,h+1]$, and
\[
 B_3^H-(\mathcal H_{\rm o}(h+1)+5)=(h-2)(2h-7)>0.
\]
Together with the preceding endpoint comparisons, all these quantities are positive for $h\ge8$.  Thus every remaining ordinary high value is below $B_3^H$, whereas every boundary value not already covered by the ordinary correction domains is above it.  No collision between a boundary value and an ordinary value is possible.
\end{proof}

\begin{lemma}[the unique possible odd defect]\label{lem:odddefect}
The only possible collision in $Z_k^{\rm o}$ involves $A_{3h+3}$, whose triangle-degree is
\[
 V_{\rm o}=2h^2+7.
\]
There is at most one such collision.
\end{lemma}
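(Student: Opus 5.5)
The plan has three steps: compute $V_{\rm o}$, reduce any collision with $A_{3h+3}$ to a quadratic equation with an integral root, and then count the possible roots through their discriminants.

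\emph{Computing $V_{\rm o}$.} Lemma~\ref{lem:oddlow} and Lemma~\ref{lem:oddordinary} already show that the low values are pairwise distinct and the high values other than $A_{3h+3}$ are pairwise distinct. Low and high values were separated in $Y_k$ by Proposition~\ref{prop:three}. After $S_2^{\rm o}$ the low minimum is $b_k=4h^2+2h-2$ (Lemma~\ref{lem:oddlow}), and every changed high value other than $A_{3h+3}$ moves by at most $1$ (Lemma~\ref{lem:oddvariation}), so that separation persists. Hence any collision must involve $A_{3h+3}$.

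For $i=3h+3$ we have $s=2k-i=h-1$. This lies in the bulk range of Lemma~\ref{lem:Yspectrum}, so $a_{3h+3}(Y_k)=H_k(h-1)+3$. With $k=2h+1$ we have $H_k=\mathcal H_{\rm o}$, and a direct expansion gives $\mathcal H_{\rm o}(h-1)=2h^2-h+3$. Adding the shift $h+1$ from Lemma~\ref{lem:oddvariation} gives $V_{\rm o}=2h^2+7$.

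\emph{Reducing to a quadratic.} Since $V_{\rm o}<4h^2+2h-2$, $A_{3h+3}$ cannot meet a low value. It also lies strictly below $B_3^H=\mathcal H_{\rm o}(3)+5=4h^2-12h+20$ for $h\ge8$. Every value in the displayed boundary chain is at least $B_3^H$, so $A_{3h+3}$ meets none of them. The only candidates are therefore the ordinary values $F(x)+c$ with $s=h+x$ in one of the four domains \eqref{eq:oddDomains} and $c\in\{3,4,5,6\}$. A collision is equivalent to
\[
 2x^2-x-(h+7-c)=0,
\]
which must have an integral root $x$ with $h+x$ in the domain belonging to $c$.

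\emph{Counting the roots.} The discriminant is $\Delta_c=1+8(h+7-c)=8h+57-8c$, which lies in $\{8h+9,8h+17,8h+25,8h+33\}$. An integral root requires $\Delta_c$ to be an odd square. Consecutive odd squares $(2m+1)^2$ and $(2m+3)^2$ differ by $8m+8$. If two odd squares lay in an interval of length $24$, then $8m+8\le24$, so $m\le2$ and the squares are at most $25$. This is impossible because $8h+9\ge73$. So at most one $c$ gives a square discriminant.

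For that $c$, the two roots sum to $1/2$, so at most one is an integer. That $c$ fixes the side ($A$ for $c=3,4$, $B$ for $c=5,6$), and the integer root fixes $s$, hence at most one vertex. Therefore at most one collision occurs.

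The main obstacle is bookkeeping rather than depth. One must confirm that $V_{\rm o}$ clears the boundary chain and the low part for all $h\ge8$. One must also check that the pair (side, $s$) determined by a root corresponds to a single vertex in the correction domains; in particular, $A$-side values with $c=3$ and $c=4$ occupy disjoint sets of $s$.
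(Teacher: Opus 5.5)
Your proposal is correct and follows the paper's proof. You compute $V_{\rm o}$ from Lemmas~\ref{lem:Yspectrum} and~\ref{lem:oddvariation}, separate the low and high spectra, and apply the discriminant bound $\Delta_c=8h+57-8c$ together with the half-integral sum of the roots. The one cosmetic difference is that you exclude the whole boundary chain at once via $V_{\rm o}<B_3^H$, whereas the paper compares $V_{\rm o}$ with $B_{2h-2}^H$ and checks the isolated point $s=2h-2$ separately.
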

\begin{proof}
The value follows from Lemma~\ref{lem:Yspectrum} and Lemma~\ref{lem:oddvariation}.  First, the low and high spectra are numerically separated.  By Lemma~\ref{lem:oddlow}, the least low value is
\[
 b_k=4h^2+2h-2.
\]
The boundary chain shows that the greatest ordinary high value is
\[
 B_0^H=\mathcal H_{\rm o}(0)+2=4h^2+2,
\]
and the special value $V_{\rm o}=2h^2+7$ is smaller than $B_0^H$.  Hence
\[
 \min(\text{low})-\max(\text{high})=2h-4>0.
\]
By Lemmas~\ref{lem:oddlow} and \ref{lem:oddordinary}, only a collision of the special high value with an ordinary high value remains possible.  An equality
\[
 \mathcal H_{\rm o}(s)+c=V_{\rm o},\qquad c\in\{3,4,5,6\},
\]
has discriminant
\[
 \Delta_c=8h+57-8c,
\]
namely
\[
 8h+33,\quad8h+25,\quad8h+17,\quad8h+9.
\]
These four odd integers lie in an interval of width $24$.  For $h\ge8$, any relevant odd square is at least $9^2$, and consecutive odd squares are separated by at least $40$.  Hence at most one discriminant can be a square.  If it is a square, the two quadratic roots have half-integral sum, so at most one root is integral.

Every boundary value not already covered by the ordinary correction domains lies strictly above $B_{2h-2}^H$.  The isolated coordinate $s=2h-2$ in $D_5^{\rm o}$ itself cannot be a root because
\[
 \mathcal H_{\rm o}(2h-2)+5-V_{\rm o}
 =2(h-4)(h-1)>0.
\]
All remaining boundary values are then excluded by the preceding chain.
\end{proof}

If $Z_k^{\rm o}$ is already triangle-distinct, no further modification is needed.  Otherwise apply
\begin{equation}\label{eq:S3odd}
 S_3^{\rm o}=S(2,h+1;2h,2h+1)
\end{equation}
and call the result $W_k^{\rm o}$.  This switch is legal: after $S_2^{\rm o}$, the entries $(2,2h)$ and $(h+1,2h+1)$ are $1$, while the crossed entries are $0$.

\begin{lemma}[odd repair]\label{lem:oddrepair}
Whenever $Z_k^{\rm o}$ has a collision, $W_k^{\rm o}$ is triangle-distinct.
\end{lemma}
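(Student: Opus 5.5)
We follow the same template as for $S_2^{\rm o}$: compute the exact effect of $S_3^{\rm o}=S(2,h+1;2h,2h+1)$ using Lemma~\ref{lem:variation}, and then check that no new equality can arise. Here $p=e_2-e_{h+1}$ and $q=e_{2h+1}-e_{2h}$. Since $C_k$ is a threshold matrix, $C_kp$ is the indicator difference of the neighbourhoods $\{j\ne2:j<2k-2\}$ and $\{j\ne h+1:j<2k-h-1\}$. This is supported on the interval $[3h+1,4h-1]$ up to the two points $2$ and $h+1$. Similarly $C_kq=e_{2h}-e_{2h+1}$ up to the diagonal corrections, because $2h$ and $2h+1$ are the consecutive indices $k-1,k$ where the threshold $i+j<2k$ flips only at $j=0$ and $j=1$. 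Substituting the supports of $Z_k^{\rm o}$ from Appendix~A into \eqref{eq:deltaA}--\eqref{eq:deltaB} should give an explicit table analogous to Lemma~\ref{lem:oddvariation}. The expected shape is:
\begin{itemize}
\item large shifts of $a_2$ and $a_{h+1}$, of order $h$;
\item changes to $b_{2h}$ and $b_{2h+1}$;
\item a uniform shift $\pm1$ on a high band $A_i,B_i$ with $3h+1\le i\le 4h-1$, that is, $s\in[1,h]$;
\item in particular, a change of $V_{\rm o}$ at $s=h-1$.
\end{itemize}

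The verification then splits into low and high indices. For low indices, I would redo Lemma~\ref{lem:oddlow}: the moved values $a_2,a_{h+1},b_{2h},b_{2h+1}$ have to be placed in the chain explicitly, showing that they land strictly between consecutive old values or above $M_{\rm o}$, using $h\ge8$. The low--high separation $2h-4$ shrinks by at most a bounded amount, so it survives.

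For high indices, the band shift modifies the correction domains \eqref{eq:oddDomains}. I would rewrite them as new $D_c$ with corrections in $\{2,\dots,7\}$ and rerun the factorization argument of Lemma~\ref{lem:oddordinary}. Equalities again reduce to $(x-y)(2x+2y-1)=\delta$ with $|\delta|\le 4$ and an odd second factor, giving a short list of candidate pairs. Each candidate is eliminated by checking domain membership, and the boundary chain is rechecked with differences of order $h$.

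The main obstacle is the special value. After the switch, $A_{3h+3}$ has value $V_{\rm o}+\varepsilon$ for an explicit small $\varepsilon$, and the ordinary values near $s\approx h$ have also shifted. The argument of Lemma~\ref{lem:odddefect} then shows the following. A collision of the new special value with an ordinary value requires some discriminant $8h+57-8c'$, with $c'$ a new correction minus $\varepsilon$, to be an odd square. The hypothesis that $Z_k^{\rm o}$ had a collision means that exactly one old discriminant $8h+57-8c$ is an odd square $m^2$. The new discriminants differ from it by multiples of $8$ bounded by about $32$, whereas the gap to the neighbouring odd squares is $\ge 4m+4\ge 40$ for $h\ge8$. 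Hence a new square can only be $m^2$ itself, corresponding to the same root $s$.

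I would then argue that at that root $s$ (which lies in $[1,h]$ or outside it) the old ordinary value and the special value shifted differently: one is inside the shifted band and the other gets $\varepsilon$, or vice versa. That breaks the equality without creating another. If this bookkeeping fails for a particular correction value $c$, I would examine that case directly, since the root is then explicit, $s=(4h+1\pm m)/4$.
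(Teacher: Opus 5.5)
\textbf{There is a genuine gap.} Your skeleton matches the paper's: exact variation from Lemma~\ref{lem:variation}, then a recheck of the low chain, then the same-square argument for the special value. However, the variation you predict is wrong, and your decisive step depends on it.

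In $Z_k^{\rm o}$, column $2h+1$ is $\{h+1\}\cup[2h+2,4h+2]\setminus\{3h+3\}$ and column $2h$ is $\{2\}\cup[2h+3,4h+2]$. Hence
\[
Xq=e_{h+1}+e_{2h+2}-e_2-e_{3h+3}.
\]
So the band term $(C_kp)_i(Xq)_i$ vanishes on $[3h+1,4h-1]$ except at $i=3h+3$. Since $C_kq=e_{2h}-e_{2h+1}$ exactly, the $B$-side can change only at $2h$ and $2h+1$. Evaluating the five terms gives the true effect:
\begin{itemize}
\item $a_{h+1}$ and $a_{3h+3}$ each drop by $1$;
\item $b_{2h}$ drops by $h-1$, and $b_{2h+1}$ rises by $h-2$;
\item $a_2$ is unchanged (its terms cancel), and so is every ordinary high value.
\end{itemize}
There is no shifted band on either side. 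Even the band you name would correspond to $s\in[3,h+1]$, not $[1,h]$.

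Your mechanism for the last collision, that the ordinary value at the root and the special value ``shift differently,'' therefore has nothing to act on. Breaking the old equality is trivial, because only $V_{\rm o}$ moves, to $V_{\rm o}-1$. The real content is the clause ``without creating another,'' and the proposal never proves it.

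The argument needed is the following. Any new square must be the old $m^2$, which forces $c_{\rm new}=c_{\rm old}-1$. The quadratic equation is then literally the same, so the integral root $s$ is the same. You must then show that no vertex at coordinate $s$ carries correction $c-1$. This is the domain check:
\[
D_4^{\rm o}\cap D_3^{\rm o}=D_6^{\rm o}\cap D_5^{\rm o}=\varnothing,\qquad D_5^{\rm o}\cap D_4^{\rm o}=\{2h-2\},
\]
where $2h-2$ was already excluded as a root in Lemma~\ref{lem:odddefect}. For the transition $3\to2$, the correction-$2$ coordinates $s\in\{0,2,2h\}$ lie outside $D_3^{\rm o}$. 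Your fallback, ``examine that case directly,'' does not substitute for this.

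On the low side, the facts actually needed are $a_{h+1}=M_{\rm o}+h-7>M_{\rm o}$ and, for the two moved $B$-values,
\[
a_k<b_k'<b_{k-1}'<a_{k-1}.
\]
After this, $b_0$ becomes the low minimum and the low--high separation is $k-4$. It does not shrink: your bounded-shrinkage remark is wrong, though harmless.
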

\begin{proof}
Lemma~\ref{lem:variation} gives the only changes:
\[
 a_{h+1},a_{3h+3}\mapsto a_{h+1}-1,a_{3h+3}-1,
\]
\[
 b_{2h}\mapsto b_{2h}-(h-1),
 \qquad b_{2h+1}\mapsto b_{2h+1}+h-2.
\]
The low spectrum remains injective.  Indeed
\[
 a_{h+1}=M_{\rm o}+h-7>M_{\rm o},
\]
and the only values moved into the central boundary gap satisfy
\[
 4h^2+2h=a_k
 <b_k'=4h^2+3h-4
 <b_{k-1}'=4h^2+3h-2
 <a_{k-1}=4h^2+4h-2.
\]
The ordering in Proposition~\ref{prop:three} shows that this gap contains no other value.

On the high side, only the special value changes, from $V_{\rm o}$ to $V_{\rm o}-1$.  The new discriminants are
\[
 8h+25,\quad8h+17,\quad8h+9,\quad8h+1.
\]
The union of the old and new discriminant sets lies in an interval of width $32$, still shorter than the gap between consecutive relevant odd squares.  Because an old collision exists, a new collision could only use the same square.  Equality of an old and new discriminant forces the correction to change from $c$ to $c-1$.

To make the relation between roots and domains explicit, write $D_c^{\rm o}$ for the old domain carrying correction $c$.  The ordinary high values themselves are unchanged by $S_3^{\rm o}$; only the special value decreases from $V_{\rm o}$ to $V_{\rm o}-1$.  Thus, if the same odd square were to generate both the old and a new collision, equality of discriminants gives
\[
 8h+57-8c_{\rm old}=8h+49-8c_{\rm new},
\]
so $c_{\rm new}=c_{\rm old}-1$.  In fact the two quadratic equations are then identical, so the same integer root $s$ would have to lie in both correction domains.  The only possible intersections are
\[
 D_4^{\rm o}\cap D_3^{\rm o}=\varnothing,
 \qquad
 D_6^{\rm o}\cap D_5^{\rm o}=\varnothing,
\]
\[
 D_5^{\rm o}\cap D_4^{\rm o}=\{2h-2\}.
\]
The last point was excluded in Lemma~\ref{lem:odddefect}.  For the remaining transition $3\to2$, the high coordinates carrying correction $2$ are $B_0^H$, $A_2^H$, and $A_{2h}^H$, with $s\in\{0,2,2h\}$; this set is disjoint from $D_3^{\rm o}$.  Hence the old integral root cannot survive as a new collision, and no high collision remains.

Finally, after $S_3^{\rm o}$ the least low value is the unchanged $b_0=k^2-k-1$, whereas the largest high value is still $b_{2k}=k^2-2k+3$.  Therefore
\[
 \min(\text{low})-\max(\text{high})=k-4>0,
\]
so no collision between the low and high spectra is introduced.
\end{proof}

\begin{theorem}\label{thm:odd}
For every odd $k\ge17$, either $Z_k^{\rm o}$ or $W_k^{\rm o}$ is $k$-balanced and gives a $2k$-regular triangle-distinct graph on $4k+2$ vertices.
\end{theorem}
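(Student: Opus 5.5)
The proof of Theorem~\ref{thm:odd} is essentially an assembly of the preceding lemmas. Write $k=2h+1$. The hypothesis $k\ge17$ is equivalent to $h\ge8$, which is exactly the range used in Lemmas~\ref{lem:oddlow}--\ref{lem:oddrepair}. It also guarantees $k\ge15$, so Lemma~\ref{lem:referencelegal} and Proposition~\ref{prop:three} are available for $Y_k$.

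\textbf{Balance.} The first step is to establish that $Z_k^{\rm o}$ and $W_k^{\rm o}$ are $k$-balanced. By Lemma~\ref{lem:referencelegal}, $Y_k$ is $k$-balanced. The switch $S_2^{\rm o}$ is legal, by the support description in Appendix~A, and $S_3^{\rm o}$ is legal after $S_2^{\rm o}$, as recorded just before Lemma~\ref{lem:oddrepair}. Every legal $2$-switch preserves all row and column sums. Hence both matrices are $k$-balanced, and Lemma~\ref{lem:regular} shows that $\Gamma_k(Z_k^{\rm o})$ and $\Gamma_k(W_k^{\rm o})$ are $2k$-regular on $4k+2$ vertices.

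\textbf{Triangle-distinctness.} I would split into two cases.

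\emph{Case 1: $Z_k^{\rm o}$ has no collision.* Then $\Gamma_k(Z_k^{\rm o})$ is triangle-distinct and we are done.

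*Case 2: $Z_k^{\rm o}$ has a collision.} Lemma~\ref{lem:odddefect} shows that the collision is unique and involves $A_{3h+3}$. Lemma~\ref{lem:oddrepair} then states directly that $W_k^{\rm o}$ is triangle-distinct.

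For the first case it is worth making explicit why the three defects of Proposition~\ref{prop:three} are gone.
\begin{itemize}
\item The low pair $a_{h+1}=b_{h-1}$ is broken in Lemma~\ref{lem:oddlow}.
\item The central pair $a_k=b_k$ is broken by the change $b_{2h+1}\mapsto b_{2h+1}-2$ from Lemma~\ref{lem:oddvariation}, since $2h+1=k$.
\item The high pair $a_{3h+3}=b_{3h+1}$ is absorbed into the single special value $V_{\rm o}$ handled in Lemma~\ref{lem:odddefect}.
\end{itemize}
Lemmas~\ref{lem:oddlow} and~\ref{lem:oddordinary} cover all other pairs within the low and high parts. The low/high separation $2h-4>0$ is shown in the proof of Lemma~\ref{lem:odddefect}.

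\textbf{Main obstacle.} The delicate point is not in this assembly but in checking that Lemma~\ref{lem:oddrepair} really covers every possible collision of $Z_k^{\rm o}$. Concretely, the repair switch must not reintroduce any of the three original defects or create a low/high crossing. I would reread its proof with that checklist in mind: the low spectrum stays injective; the high side changes only in $V_{\rm o}$; and the gap $k-4$ between the parts persists. I would also confirm that each changed value in Lemma~\ref{lem:oddrepair} is accounted for there, including $b_{2h}$ and $b_{2h+1}$, which are low indices since $2h<2h+1=k$. Once that is verified, the theorem follows immediately.
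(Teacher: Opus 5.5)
Your proposal is correct and is essentially the paper's proof. Balance holds because every legal $2$-switch preserves the margins, Lemma~\ref{lem:regular} gives $2k$-regularity on $4k+2$ vertices, and then either $Z_k^{\rm o}$ is collision-free or Lemma~\ref{lem:oddrepair} makes $W_k^{\rm o}$ triangle-distinct, with Lemma~\ref{lem:odddefect} locating any collision. Your extra accounting of the three defects is already contained in Lemmas~\ref{lem:oddlow}--\ref{lem:odddefect} and is not needed for the assembly.
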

\begin{proof}
Balance is preserved by every switch.  The conclusion follows from Lemmas~\ref{lem:regular}, \ref{lem:odddefect}, and \ref{lem:oddrepair}.
\end{proof}

\section{Even parameters}
Let
\[
 k=2h,
 \qquad h\ge31.
\]
Starting from $Y_k$, apply
\begin{equation}\label{eq:S2even}
 S_2^{\rm e}=S(h+3,3h-1;h+1,2h)
\end{equation}
and call the result $Z_k^{\rm e}$.

\begin{lemma}[even variation under the second switch]\label{lem:evenvariation}
Relative to $Y_k$, the only changes are
\begin{align*}
 a_{h+3}&\mapsto a_{h+3}+h-6,&
 a_i&\mapsto a_i+1 &&(2h+1\le i\le3h-4),\\
 a_{3h-1}&\mapsto a_{3h-1}+h-2,\\
 b_{h+1}&\mapsto b_{h+1}+2h-6,&
 b_{2h}&\mapsto b_{2h}-2,&
 b_i&\mapsto b_i+1 &&(2h+1\le i\le3h-4).
\end{align*}
All other triangle-degrees are unchanged.
\end{lemma}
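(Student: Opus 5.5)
The plan is to substitute the switch $S_2^{\rm e}=S(h+3,3h-1;h+1,2h)$ into Lemma~\ref{lem:variation}, with $r=h+3$, $s=3h-1$, $c=h+1$, $d=2h$. Then $p=e_{h+3}-e_{3h-1}$ and $q=e_{2h}-e_{h+1}$. First I will check legality against the supports of $Y_k$ in Appendix~A. The rows $h+3$ and $3h-1$ lie in the bulk ranges $3\le i\le k-3$ and $3\le s\le k-4$, with $s=2k-(3h-1)=h+1$. In these ranges the supports are intervals with at most two exceptional points, so the entries $(h+3,h+1)=1$, $(3h-1,2h)=1$, $(h+3,2h)=0$ and $(3h-1,h+1)=0$ can be read off directly.

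Next I will compute the ingredients of \eqref{eq:deltaA} and \eqref{eq:deltaB}. These are the vectors $C_kp$ and $C_kq$, the vectors $Xq$ and $X^\top p$ (namely column $2h$ minus column $h+1$ of $Y_k$, and row $h+3$ minus row $3h-1$), and the vectors $C_kXq$, $XC_kq$, $C_kX^\top p$, $X^\top C_kp$. I also need the scalars $q^\top C_kq$ and $p^\top C_kp$. Since $(C_k)_{ij}=1$ iff $i\neq j$ and $i+j<2k$, we have $C_kp=\one_{[0,2k-h-4]\setminus\{h+3\}}-\one_{[0,h]}$, which equals $\one_{[h+1,3h-4]\setminus\{h+3\}}$. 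Similarly $C_kq$ is an interval indicator, and the scalars follow: $q^\top C_kq=0-(1+\dots)$ evaluated entrywise.

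For $i\notin\{r,s\}$ we have $p_i=0$, so only the term $(C_kp)_i(Xq)_i$ survives in \eqref{eq:deltaA}. This is nonzero exactly on the support of $C_kp$ intersected with the rows where columns $2h$ and $h+1$ differ in $Y_k$. By the appendix supports, those rows are the high rows with $s$-index between $1$ and roughly $h$. Intersecting with $[h+1,3h-4]$ should give the band $2h+1\le i\le 3h-4$ with increment $+1$. The $B$-side is analogous: $q_i=0$ off $\{h+1,2h\}$, and the band comes from $(C_kq)_i(X^\top p)_i$. For the four switched indices $h+3$, $3h-1$ (on $A$) and $h+1$, $2h$ (on $B$), all five terms contribute. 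Here I will evaluate $(C_kXq)_i$ as $|N_{C_k}(i)\cap Q_{2h}|-|N_{C_k}(i)\cap Q_{h+1}|$, and $(XC_kq)_i$ similarly via row supports. Interval arithmetic then yields $h-6$, $h-2$, $2h-6$ and $-2$.

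The main obstacle is bookkeeping at the exceptional support points. The supports of $Y_k$ deviate from intervals near $s+2$, near $j+2$, and at the boundary rows $0,1,2,k-1$. An off-by-one error there would shift the band endpoint $3h-4$ or the constants. I would verify these by checking the contributions of the exceptional points separately. As a sanity check, I would confirm the constants numerically for a small instance such as $h=31$, using the formulas of Lemma~\ref{lem:tdformula} directly.
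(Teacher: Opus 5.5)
Your plan is the paper's own argument: the paper's proof is just the instruction to substitute $S_2^{\rm e}$ into Lemma~\ref{lem:variation}. Your legality check and your formula $C_kp=\one_{[h+1,3h-4]\setminus\{h+3\}}$ are correct. However, two details of your sketch are wrong and would derail the computation if followed literally.

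First, columns $2h$ and $h+1$ of $Y_k$ have supports $Q_{2h}=[2h+1,4h]$ and $Q_{h+1}=[3h,4h]\cup\{h+3\}$, so $Xq=\one_{[2h+1,3h-1]}-e_{h+3}$. The rows where these columns differ are therefore $s\in[h+1,2h-1]$ together with the low row $h+3$, not $s\in[1,h]$. With your description, the intersection with $[h+1,3h-4]$ would be empty instead of the band $[2h+1,3h-4]$.

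Second, $C_kq=e_{h+1}-\one_{[2h,3h-2]}$ is not a single interval indicator, and the isolated $+1$ at $h+1$ is essential. It gives $q^\top C_kq=-2$ (while $p^\top C_kp=0$). It accounts for the $+1$ in $(XC_kq)_{h+3}=1-2=-1$. It also produces $(C_kq)_{h+1}(X^\top p)_{h+1}+2q_{h+1}(C_kq)_{h+1}=1-2$ in the change of $b_{h+1}$.

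With $X^\top p=e_{h+1}-\one_{[h+2,3h-4]}-e_{3h-1}$, the remaining inputs are $(C_kXq)_{h+3}=h-4$, $(C_kXq)_{3h-1}=0$, $(XC_kq)_{3h-1}=1-h$, $(C_kX^\top p)_{h+1}=5-2h$, $(X^\top C_kp)_{h+1}=0$, $(C_kX^\top p)_{2h}=3-h$ and $(X^\top C_kp)_{2h}=h-4$. These yield exactly $h-6$, $h-2$, $2h-6$ and $-2$.

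Several of the five terms actually vanish at the switched indices, for instance $(C_kp)_{h+3}=(C_kp)_{3h-1}=0$. So your remark that all five contribute is inaccurate, though harmless.
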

\begin{proof}
Substitute \eqref{eq:S2even} in Lemma~\ref{lem:variation}.
\end{proof}

\begin{lemma}[even separation for low indices]\label{lem:evenlow}
The triangle-degrees of $Z_k^{\rm e}$ with low indices are pairwise distinct.
\end{lemma}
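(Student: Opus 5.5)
The proof follows the same pattern as Lemma~\ref{lem:oddlow}. By Lemma~\ref{lem:evenvariation}, the switch $S_2^{\rm e}$ changes exactly three low-index triangle-degrees: $a_{h+3}$, $b_{h+1}$ and $b_k=b_{2h}$. The index range $2h+1\le i\le 3h-4$ is high, so its shifts play no role here. All other low values keep their $Y_k$-values. By Proposition~\ref{prop:three}, these unchanged values are pairwise distinct apart from the two pairs $a_{h-1}=b_{h+1}$ and $a_k=b_k$. In each of these pairs one member is now moved, so the unchanged low values are pairwise distinct. It therefore suffices to show that each of the three new values is distinct from every other low value.

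\emph{The central vertex.} Lemma~\ref{lem:Yspectrum} with $k=2h$ gives $b_0=4h^2-2h-1$ and $b_k(Y_k)=4h^2-2h$. Hence the new value is $b_k=4h^2-2h-2=b_0(Y_k)-1$. In Proposition~\ref{prop:three}, $b_0$ is the least low value, so $b_k$ becomes the unique minimum of the low spectrum. This also separates it from the two raised values below.

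\emph{The two raised values.} I will show that the new $a_{h+3}$ and $b_{h+1}$ lie strictly above the maximum $M_{\rm e}$ of the unchanged low values.
\begin{itemize}
\item The maximum of the concave profile $L_k(i)=(2h-1)^2+(4h-1)i-2i^2$ occurs at the integer nearest to $h-\tfrac14$, namely $i=h$. This gives $L_k(h)=6h^2-5h+1$.
\item The unchanged $B$-bulk values are $L_k(i)+2$, so the candidate maximum is $M_{\rm e}=b_h(Y_k)=6h^2-5h+3$.
\item I will check this against the neighbouring candidate $b_{h-1}=6h^2-5h+2$ and against the exceptional boundary values of Lemma~\ref{lem:Yspectrum}. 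All of the latter are of order $k^2=4h^2$, hence far below $M_{\rm e}$.
\end{itemize}
Direct substitution then gives
\[
 a_{h+3}=L_k(h+3)+h-6=6h^2-4h-26=M_{\rm e}+h-29,
\]
\[
 b_{h+1}=L_k(h+1)+2+2h-6=6h^2-3h-6=a_{h+3}+h+20.
\]
Both values exceed $M_{\rm e}$ because $h\ge31$; this is where the hypothesis $h\ge31$ is used. They also differ from each other. The value of $a_{h+3}$ before the switch is irrelevant, since that vertex has been moved.

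The only real obstacle is confirming that $M_{\rm e}=b_h(Y_k)$ really is the largest unchanged low value, so that no boundary or bulk value exceeds it and meets one of the raised values. This follows from the concavity of $L_k$ together with the explicit boundary table of Lemma~\ref{lem:Yspectrum}. Everything else is routine arithmetic in $h$.
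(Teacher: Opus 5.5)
Your proposal is correct and follows essentially the same route as the paper: the new $b_k=b_0(Y_k)-1$ is the unique minimum, the raised values $a_{h+3}=M_{\rm e}+h-29$ and $b_{h+1}=a_{h+3}+h+20$ lie above $M_{\rm e}=b_h(Y_k)$, and Proposition~\ref{prop:three} handles the unchanged values. You also supply details the paper leaves implicit, namely why $M_{\rm e}$ is the largest unchanged low value (concavity of $L_k$, with its integer maximum at $i=h$, and all boundary values being $k^2+O(k)$); your arithmetic checks out.
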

\begin{proof}
The new value
\[
 b_k=4h^2-2h-2=b_0(Y_k)-1
\]
is the unique minimum.  The maximum among the unchanged low values is
\[
 M_{\rm e}=b_h(Y_k)=6h^2-5h+3.
\]
The two moved noncentral vertices satisfy
\[
 a_{h+3}=M_{\rm e}+h-29,
 \qquad b_{h+1}=a_{h+3}+h+20.
\]
For $h\ge31$ they are distinct and lie above all unchanged low values.  Proposition~\ref{prop:three} handles the rest.
\end{proof}

For high indices set
\[
 \mathcal H_{\rm e}(s)=(2h-1)^2-(4h-1)s+2s^2.
\]
The special vertex is $A_{3h-1}$, corresponding to $s=h+1$.  The ordinary correction domains are
\begin{align}
 D_3^{\rm e}&=[3,h]\cup\{h+2,h+3\},\notag\\
 D_4^{\rm e}&=[h+4,2h-2],\notag\\
 D_5^{\rm e}&=[3,h+3]\cup\{2h-3\},\label{eq:evenDomains}\\
 D_6^{\rm e}&=[h+4,2h-4].\notag
\end{align}
The following boundary chain isolates the exceptional coordinates and an endpoint of the ordinary range:
\begin{align*}
 B_0^H&>A_0^H>A_{2h-1}^H>B_{2h-1}^H>B_1^H>A_1^H>A_{2h-2}^H\\
 &\quad>B_{2h-2}^H>B_2^H>A_2^H>B_{2h-3}^H>A_{2h-3}^H>B_3^H,
\end{align*}
with successive differences
\[
2,\ 2h-3,\ 1,\ 2(h-2),\ 2,\ 2(h-3),\ 1,\ 2h-5,\ 2,\ 2(h-4),\ 1,\ 2h-7.
\]

\begin{lemma}[ordinary even high spectrum]\label{lem:evenordinary}
All triangle-degrees of $Z_k^{\rm e}$ with high indices, other than that of $A_{3h-1}$, are pairwise distinct.
\end{lemma}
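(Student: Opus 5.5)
I will follow the proof of Lemma~\ref{lem:oddordinary} step for step, with the even data from Lemma~\ref{lem:evenvariation} and \eqref{eq:evenDomains}. First I would check that the domains \eqref{eq:evenDomains} are correct. Take the high table of Lemma~\ref{lem:Yspectrum} with $k=2h$. A high index $2h+1\le i\le 3h-4$ corresponds to $s=2k-i\in[h+4,2h-1]$. The corrections $+1$ from $S_2^{\rm e}$ then raise the $A$-correction from $3$ to $4$ on $[h+4,2h-2]$ and the $B$-correction from $5$ to $6$ on $[h+4,2h-4]$. At $s=2h-3$ the $B$-side value is $H_k+4$, which becomes $5$. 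The coordinates $s=2h-1,2h-2$ and $s=0,1,2$ stay in the boundary chain, and $b_{2h}$ (that is, $s=2h$) drops by $2$. This matches the listed domains, with $s=h+1$ on the $A$-side removed as special.

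Next I would substitute $s=h+x$, so that $\mathcal H_{\rm e}(h+x)=2x^2-x+\text{const}$, with constant $2h^2-2h+1$. The corrections become
\[
c_A(x)=3+\one_{x\ge4},\qquad c_B(x)=5+\one_{x\ge4},
\]
apart from the isolated point $s=2h-3$ on the $B$-side and the removed point $x=1$ on the $A$-side. Any equality between two ordinary values then reduces to
\[
(x-y)(2x+2y-1)=c_Q(y)-c_P(x),
\]
whose right-hand side has absolute value at most $3$. Equalities within one band are impossible because the second factor is odd and nonzero. For cross-band equalities I would list the factorizations $\pm1\cdot\pm1$ and $\pm1\cdot\pm3$. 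The solutions are $(x,y)$ with $x,y\in\{-1,0,1,2\}$, and $(-1,0)$ and $(2,-1)$ when the difference is $3$.

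I then check these against the domains. The $4$- and $6$-bands require $x\ge4$, so every pair involving them dies. Between the $A_3$- and $B_5$-bands, the solution of $(x-y)(2x+2y-1)=2$ is $(x,y)=(1,-1)$. Here $x=1$ is exactly the special coordinate $s=h+1$, and this is the residual equality that the construction deliberately leaves. The point $s=2h-3$ on the $B_5$-side has $x=h-3$, far outside these small solutions.

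Finally I would separate the ordinary values from the boundary chain, as in the odd case. Every chain value above $B_3^H=\mathcal H_{\rm e}(3)+5$ is already ordered. Using convexity of $\mathcal H_{\rm e}$, I would check that every remaining ordinary value is strictly below $B_3^H$ by evaluating $B_3^H-(\mathcal H_{\rm e}(s)+c)$ at the endpoints of each band. The cases are: $s=3$ and $s=h$ and the points $h+2,h+3$ with $c=3$; $s=h+4$ and $s=2h-2$ with $c=4$; $s=4$, $s=h+3$ and the point $2h-3$ with $c=5$; and $s=h+4$ and $s=2h-4$ with $c=6$.

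For the near-boundary endpoints these differences are linear in $h$, of the form $2h-c'$, or small constants such as $2$. The middle endpoints give quadratics of order $2h^2$, and all are positive for $h\ge31$. The main obstacle is exactly this bookkeeping near $s=2h-2$ and $s=2h-3$. There the ordinary values are close to the chain, so I must confirm that $A^H_{2h-3}$ and $B^H_{2h-3}$ are placed correctly and that the band endpoint $s=2h-2$ is not double-counted.
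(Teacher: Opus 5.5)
Your overall route is the paper's: substitute $s=h+x$, factor, check the domains, then place every non-chain ordinary value below $B_3^H$ by convexity. However, two steps fail as written.

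\textbf{The quadratic.} You carried over the odd-case quadratic. In fact $\mathcal H_{\rm e}(h+x)=2x^2+x+2h^2-3h+1$, so the key identity is $(x-y)(2x+2y+1)=c_Q(y)-c_P(x)$, not $(x-y)(2x+2y-1)=\cdots$. This matters exactly in the decisive case.

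With your sign, the $A_3$--$B_5$ equation $(x-y)(2x+2y-1)=2$ has the unique integer solution $(x,y)=(-1,1)$; your pair $(1,-1)$ gives $-2$. That solution is an equality between $A$ at $s=h-1\in D_3^{\rm e}$ and $B$ at $s=h+1\in D_5^{\rm e}$, two ordinary coordinates. Taken literally, your computation would refute the lemma. Only with the correct sign is the unique solution $(1,-1)$, whose $A$-coordinate is the removed special vertex $A_{3h-1}$.

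Your difference-$3$ solutions are also the odd-case ones. The even ones are $(1,0)$ and $(-2,1)$. These are harmless, because they would need a $6$-band coordinate with $x\ge4$.

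\textbf{The chain separation.} Your endpoint list includes chain points. The coordinates $s=2h-3,2h-2$ on the $A$-side (correction $4$) and $s=2h-3$ on the $B$-side (correction $5$) belong to the boundary chain and lie above $B_3^H$. For instance,
\[
B_3^H-(\mathcal H_{\rm e}(2h-2)+4)=16-6h,\qquad B_3^H-(\mathcal H_{\rm e}(2h-3)+5)=6-2h,
\]
and both are negative, contrary to your claim that all such differences are positive.

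The fix, which is what the paper does, is to delete the chain coordinates from the ordinary intervals before applying convexity:
\begin{itemize}
\item the $A_4$-interval becomes $[h+4,2h-4]$, with right-endpoint difference $2(h-3)$;
\item the $B_5$-band becomes $[4,h+3]$, while $s=3$ and $s=2h-3$ on the $B$-side are already placed by the strict ordering of the chain.
\end{itemize}
This is precisely the bookkeeping you flagged as the main obstacle but did not carry out.

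A smaller point: $b_{2h}=b_k$ is a low index, handled in Lemma~\ref{lem:evenlow}. It is not a high coordinate with $s=2h$, since high coordinates satisfy $s\le 2h-1$.
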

\begin{proof}
Put $s=h+x$.  The common quadratic part is
\[
 F(x)=2x^2+x+2h^2-3h+1.
\]
The ordinary corrections are
\[
 c_A(x)=3+\mathbf1_{x\ge4},\qquad
 c_B(x)=5+\mathbf1_{x\ge4},
\]
with $A(x=1)$ removed.  Any equality has the form
\[
 (x-y)(2x+2y+1)=c_Q(y)-c_P(x).
\]
Factoring the possible right-hand sides gives the complete list
\[
\begin{array}{c|c}
\text{bands compared}&(x,y)\\ \hline
A_3,A_4&(-1,0)\\
B_5,B_6&(-1,0)\\
A_3,B_5&(1,-1)\\
A_3,B_6&(-2,1),(1,0)\\
A_4,B_5&(-1,0)\\
A_4,B_6&(1,-1).
\end{array}
\]
The domains in \eqref{eq:evenDomains} eliminate all these pairs except the formal equality $A(x=1)=B(x=-1)$, and $A(x=1)$ is the removed special vertex.

It remains to compare the boundary coordinates not already covered by the ordinary correction domains with the remaining ordinary values.  The last value in the boundary chain is
\[
 B_3^H=\mathcal H_{\rm e}(3)+5,
\]
and all preceding boundary values are larger.  Since $\mathcal H_{\rm e}$ is convex, the maximum of each remaining ordinary interval occurs at an endpoint.  The endpoint differences needed for the right-hand bands are
\begin{align*}
 B_3^H-(\mathcal H_{\rm e}(2h-4)+4)&=2(h-3),\\
 B_3^H-(\mathcal H_{\rm e}(2h-4)+6)&=2(h-4),
\end{align*}
while for the left endpoints
\begin{align*}
 B_3^H-(\mathcal H_{\rm e}(h+4)+4)&=2h^2-13h-14,\\
 B_3^H-(\mathcal H_{\rm e}(h+4)+6)&=2h^2-13h-16.
\end{align*}
For the correction-$3$ $A$-band, convexity on $[3,h]$ and the isolated points $h+2,h+3$ reduce the comparison to
\begin{align*}
 B_3^H-(\mathcal H_{\rm e}(3)+3)&=2,\\
 B_3^H-(\mathcal H_{\rm e}(h)+3)&=2h^2-13h+23,\\
 B_3^H-(\mathcal H_{\rm e}(h+2)+3)&=2h^2-13h+13,\\
 B_3^H-(\mathcal H_{\rm e}(h+3)+3)&=2h^2-13h+2.
\end{align*}
For the correction-$5$ $B$-band, after removing the boundary point $s=3$ and the isolated boundary point $s=2h-3$, the remaining interval is $[4,h+3]$, with endpoint differences
\[
 B_3^H-(\mathcal H_{\rm e}(4)+5)=4h-15,
 \qquad
 B_3^H-(\mathcal H_{\rm e}(h+3)+5)=h(2h-13).
\]
Every displayed quantity is positive for $h\ge31$.  Hence all remaining ordinary high values lie strictly below $B_3^H$, whereas every boundary value not already covered by the ordinary correction domains lies strictly above it.  Thus there is no collision between a boundary value and an ordinary value.
\end{proof}

\begin{lemma}[the unique possible even defect]\label{lem:evendefect}
The only possible collision in $Z_k^{\rm e}$ involves $A_{3h-1}$, whose value is
\[
 V_{\rm e}=2h^2-2h+5.
\]
There is at most one such collision.
\end{lemma}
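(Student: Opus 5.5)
The plan follows the proof of Lemma~\ref{lem:odddefect} step by step, using Lemmas~\ref{lem:evenvariation}, \ref{lem:evenlow} and~\ref{lem:evenordinary} in place of their odd counterparts.

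\emph{The value $V_{\rm e}$.} The special vertex $A_{3h-1}$ corresponds to $s=2k-(3h-1)=h+1$. This lies in the high bulk range $3\le s\le k-4$ of Lemma~\ref{lem:Yspectrum}, so in $Y_k$ its value is $H_k(h+1)+3=\mathcal H_{\rm e}(h+1)+3$. Expanding gives $\mathcal H_{\rm e}(h+1)=2h^2-3h+4$. Adding the increment $h-2$ from Lemma~\ref{lem:evenvariation} gives $V_{\rm e}=2h^2-2h+5$.

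\emph{Low and high spectra are separated.} By Lemma~\ref{lem:evenlow}, the least low value is $b_k=4h^2-2h-2$. The top of the high boundary chain gives the greatest ordinary high value, $B_0^H=\mathcal H_{\rm e}(0)+2=4h^2-4h+3$. The special value $V_{\rm e}$ is well below this. The gap is therefore $2h-5>0$, so no low value meets a high value. Combined with Lemmas~\ref{lem:evenlow} and~\ref{lem:evenordinary}, the only remaining possible collision is between $V_{\rm e}$ and an ordinary high value.

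\emph{Collisions with ordinary band values.} Such a collision means $\mathcal H_{\rm e}(s)+c=V_{\rm e}$ for some $c\in\{3,4,5,6\}$, with $s$ in the matching domain of \eqref{eq:evenDomains}. This is the quadratic
\[
 2s^2-(4h-1)s+(2h^2-2h-4+c)=0 .
\]
Its discriminant is $\Delta_c=8h+33-8c$, which takes the values $8h+9$, $8h+1$, $8h-7$, $8h-15$. These four odd integers lie in a window of width $24$. For $h\ge31$, any odd square $m^2$ in this range has $m\ge15$. Consecutive odd squares $m^2$ and $(m+2)^2$ then differ by $4m+4\ge64$, so at most one $\Delta_c$ is a perfect square. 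When it is, the two roots sum to $(4h-1)/2$, which is not an integer. Hence at most one root is an integer, and there is at most one collision with the band values.

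\emph{Collisions with boundary values.} The boundary coordinates not covered by the ordinary domains sit in the chain above $B_3^H$. It suffices to show $V_{\rm e}$ lies below the relevant end of that chain, which is the main obstacle. Concretely, I would show:
\begin{itemize}
\item $V_{\rm e}<B_{2h-3}^H$, or else locate $V_{\rm e}$ explicitly relative to the chain entries below $B_{2h-2}^H$;
\item the isolated point $s=2h-3$ of $D_5^{\rm e}$ is not a root, since $\mathcal H_{\rm e}(2h-3)=4h^2-14h+16$ gives
\[
 \mathcal H_{\rm e}(2h-3)+5-V_{\rm e}=2(h-2)(h-4)>0 .
\]
\end{itemize}
Every other boundary value is larger than this one by the successive differences of the chain, so none can equal $V_{\rm e}$. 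The isolated points $h+2,h+3$ and the band endpoints are already inside the domains handled by the discriminant count.

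The only genuine work is therefore the careful bookkeeping of which boundary entries lie below $V_{\rm e}$. I would resolve it by the same chain comparison used in the odd case.
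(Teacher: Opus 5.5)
Your proposal is correct and follows the paper's proof essentially step by step: you separate the low and high spectra via $b_k-B_0^H=2h-5$, use the four discriminants $8h+33-8c$ with the odd-square gap and the half-integral root sum $(4h-1)/2$, and then compare with the boundary chain. The step you flag as the main obstacle is already settled by your second bullet: $\mathcal H_{\rm e}(2h-3)+5$ is exactly $B_{2h-3}^H$, so $2(h-2)(h-4)>0$ gives $V_{\rm e}<B_{2h-3}^H$, and every boundary value outside the ordinary domains lies above $B_{2h-3}^H$ in the chain, which is precisely the paper's argument.
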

\begin{proof}
As in the odd case, low and high values are separated before considering the special defect.  By Lemma~\ref{lem:evenlow}, the least low value is
\[
 b_k=4h^2-2h-2.
\]
The greatest ordinary high value is
\[
 B_0^H=\mathcal H_{\rm e}(0)+2=4h^2-4h+3,
\]
and the special value $V_{\rm e}=2h^2-2h+5$ is smaller.  Thus
\[
 \min(\text{low})-\max(\text{high})=2h-5>0.
\]
Consequently any remaining collision must be wholly inside the high spectrum and involve the special vertex.  An equality
\[
 \mathcal H_{\rm e}(s)+c=V_{\rm e},\qquad c\in\{3,4,5,6\},
\]
has one of the discriminants
\[
 8h+9,\quad8h+1,\quad8h-7,\quad8h-15.
\]
They lie in an interval of width $24$.  For $h\ge31$, consecutive relevant odd squares are farther apart, so at most one is a square.  Moreover the two roots of the quadratic have sum $(4h-1)/2$, a half-integer; therefore at most one root can be integral.  Every boundary value not already covered by the ordinary correction domains lies strictly above $B_{2h-3}^H$.  The isolated point $s=2h-3$ in $D_5^{\rm e}$ itself is not a root because
\[
 \mathcal H_{\rm e}(2h-3)+5-V_{\rm e}
 =2(h-4)(h-2)>0.
\]
The other claims follow from Lemmas~\ref{lem:evenlow} and \ref{lem:evenordinary}.
\end{proof}

If $Z_k^{\rm e}$ is already triangle-distinct, no further modification is needed.  Otherwise apply
\begin{equation}\label{eq:S3even}
 S_3^{\rm e}=S(2,h+3;2h-1,2h)
\end{equation}
and call the result $W_k^{\rm e}$.  This switch is legal: after $S_2^{\rm e}$, the entries $(2,2h-1)$ and $(h+3,2h)$ are $1$, while the crossed entries are $0$.

\begin{lemma}[even repair]\label{lem:evenrepair}
Whenever $Z_k^{\rm e}$ has a collision, $W_k^{\rm e}$ is triangle-distinct.
\end{lemma}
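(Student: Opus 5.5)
I will follow the template of Lemma~\ref{lem:oddrepair}. I first apply Lemma~\ref{lem:variation} to $S_3^{\rm e}=S(2,h+3;2h-1,2h)$, so $p=e_2-e_{h+3}$ and $q=e_{2h}-e_{2h-1}$, evaluated on $Z_k^{\rm e}$. I expect the analogue of the odd computation: $a_{h+3}$ and the special value $a_{3h-1}$ each drop by $1$; $b_{2h-1}$ and $b_{2h}$ move by amounts of order $\pm h$; all other triangle-degrees are unchanged. Since $C_k$ is a threshold matrix, the vectors $C_kp$ and $C_kq$ are interval indicators, and I will check directly that their supports meet only these coordinates. In particular, the special value becomes $V_{\rm e}-1$, and no ordinary high value moves.

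\emph{Low spectrum.} The moved low values are $a_{h+3}=M_{\rm e}+h-30$, which still exceeds $M_{\rm e}$ for $h\ge31$ and stays below $b_{h+1}$, together with $b_k=b_{2h}$ and $b_{k-1}=b_{2h-1}$. With the explicit values, I will show that $b_k'$ and $b_{k-1}'$ fall into the gap between $a_k=4h^2-2h$ and $a_{k-1}=4h^2-3$. That gap has length $2h-3$, and the ordering in Proposition~\ref{prop:three} shows it contains no other value. They must also stay distinct from each other and above $\max(\text{high})$, so the low/high separation $\min(\text{low})-\max(\text{high})>0$ survives. The minimum of the low side reverts to $b_0=k^2-k-1$, exactly as in the odd case.

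\emph{High spectrum.} Here I use the discriminant argument. The old discriminants are $8h+9-8j$ for $j=0,\dots,3$, and the new ones for $V_{\rm e}-1$ are $8h+1-8j$. Together they form the set $\{8h+9,\dots,8h-23\}$, of width $32$. For $h\ge31$ this is less than the gap $\ge 2\sqrt{8h-23}+\dots>32$ between consecutive odd squares near $8h$. Since an old collision exists, its square is the only available one, so a new collision must come from the same square with $c_{\rm new}=c_{\rm old}-1$. The two quadratics then coincide, so the same integer $s$ would have to lie in both $D^{\rm e}_{c}$ and $D^{\rm e}_{c-1}$.

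From \eqref{eq:evenDomains} I read off $D_4^{\rm e}\cap D_3^{\rm e}=\varnothing$, $D_6^{\rm e}\cap D_5^{\rm e}=\varnothing$ and $D_5^{\rm e}\cap D_4^{\rm e}=\{2h-3\}$, and the last point was excluded in Lemma~\ref{lem:evendefect}. For the transition $3\to2$, the correction-$2$ coordinates are $s\in\{0,2,2h-2\}$ (namely $B_0^H$, $A_2^H$ and $A_{2h-2}^H$ from the boundary chain), which is disjoint from $D_3^{\rm e}$. The boundary-chain values lie above $B_3^H$, far above $V_{\rm e}-1$, so they cannot collide either.

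\emph{Main obstacle.} The hard step is the exact variation computation for $S_3^{\rm e}$: confirming which coordinates change and by how much, especially $b_{2h-1}$ and $b_{2h}$, which must land in the central low gap without hitting each other. I would do this first, tabulating the supports $R_i$ and $Q_i$ from Appendix~A near indices $2$, $h+3$, $2h-1$ and $2h$.
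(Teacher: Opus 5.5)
Your proposal is correct and follows the paper's proof essentially step for step. It uses the same four changes under $S_3^{\rm e}$ (indeed $b_{2h-1}\mapsto b_{2h-1}-(h+1)$ and $b_{2h}\mapsto b_{2h}+h$, giving $a_k<b_{k-1}'=4h^2-h-5<b_k'=4h^2-h-2<a_{k-1}$), the same shared-square discriminant argument, and the same domain intersections. One harmless slip: the third correction-$2$ coordinate is $A_{2h-1}^H$ ($s=2h-1$), not $A_{2h-2}^H$, which carries correction $4$ and lies in $D_4^{\rm e}$. The set $\{0,2,2h-1\}$ is still disjoint from $D_3^{\rm e}$, and all boundary values exceed $B_3^H>V_{\rm e}$ in any case, so the conclusion is unaffected.
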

\begin{proof}
The only changes are
\[
 a_{h+3},a_{3h-1}\mapsto a_{h+3}-1,a_{3h-1}-1,
\]
\[
 b_{2h-1}\mapsto b_{2h-1}-(h+1),
 \qquad b_{2h}\mapsto b_{2h}+h.
\]
For the low spectrum,
\[
 a_{h+3}=M_{\rm e}+h-30>M_{\rm e},
\]
and
\[
 4h^2-2h=a_k
 <b_{k-1}'=4h^2-h-5
 <b_k'=4h^2-h-2
 <a_{k-1}=4h^2-3.
\]
Hence no low collision is introduced.

The special high value decreases from $V_{\rm e}$ to $V_{\rm e}-1$.  The new discriminants are
\[
 8h+1,\quad8h-7,\quad8h-15,\quad8h-23.
\]
The old and new sets lie in an interval of width $32$, containing at most one relevant odd square.  More explicitly, if one square were shared by an old collision with correction $c_{\rm old}$ and a new collision with correction $c_{\rm new}$, then equality of discriminants gives $c_{\rm new}=c_{\rm old}-1$; the two quadratic equations are then identical, so the same integer root must lie in both domains.  The relevant domain intersections are
\[
 D_4^{\rm e}\cap D_3^{\rm e}=\varnothing,
 \qquad
 D_6^{\rm e}\cap D_5^{\rm e}=\varnothing,
\]
\[
 D_5^{\rm e}\cap D_4^{\rm e}=\{2h-3\}.
\]
The isolated point $2h-3$ was excluded in Lemma~\ref{lem:evendefect}.  For the remaining transition $3\to2$, the high coordinates carrying correction $2$ are $B_0^H$, $A_2^H$, and $A_{2h-1}^H$, with $s\in\{0,2,2h-1\}$; this set is disjoint from $D_3^{\rm e}$.  Thus the old root cannot survive as a new collision.

After $S_3^{\rm e}$ the least low value is the unchanged $b_0=k^2-k-1$, while the largest high value remains $b_{2k}=k^2-2k+3$.  Hence
\[
 \min(\text{low})-\max(\text{high})=k-4>0,
\]
and no collision between the low and high spectra is introduced.
\end{proof}

\begin{theorem}\label{thm:even}
For every even $k\ge62$, either $Z_k^{\rm e}$ or $W_k^{\rm e}$ is $k$-balanced and gives a $2k$-regular triangle-distinct graph on $4k+2$ vertices.
\end{theorem}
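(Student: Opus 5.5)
The proof will mirror that of Theorem~\ref{thm:odd}, with the even-case lemmas in place of the odd ones. Set $k=2h$, so that $k\ge62$ is exactly the standing hypothesis $h\ge31$ of this section.

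\emph{Balance and regularity.} By Lemma~\ref{lem:referencelegal}, $Y_k$ is $k$-balanced. Next I check that $S_2^{\rm e}=S(h+3,3h-1;h+1,2h)$ is legal on $Y_k$, reading off the four entries from the support description in Appendix~A:
\[
 (h+3,h+1)=1,\qquad (3h-1,2h)=1,\qquad (h+3,2h)=0,\qquad (3h-1,h+1)=0.
\]
These hold because row $h+3$ was only touched by the switch with parameter $a=h+1$ (or $a=h+3$), and $3h-1\ge k$ is never a switched row. The legality of $S_3^{\rm e}$ is stated just before Lemma~\ref{lem:evenrepair}. Since a legal $2$-switch preserves all margins, both $Z_k^{\rm e}$ and $W_k^{\rm e}$ are $k$-balanced. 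Lemma~\ref{lem:regular} then shows that $\Gamma_k(Z_k^{\rm e})$ and $\Gamma_k(W_k^{\rm e})$ are $2k$-regular on $4k+2$ vertices.

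\emph{Triangle-distinctness.} I split into two cases.
\begin{itemize}
 \item If $Z_k^{\rm e}$ has no collision, it is the required graph.
 \item Otherwise, Lemma~\ref{lem:evendefect} says the collision is unique and involves $A_{3h-1}$. Lemma~\ref{lem:evenrepair} then shows that $W_k^{\rm e}$ is triangle-distinct.
\end{itemize}
For completeness I would recall how the earlier lemmas combine to give the claim that every collision in $Z_k^{\rm e}$ involves $A_{3h-1}$:
\begin{itemize}
 \item Lemma~\ref{lem:evenlow} shows the low spectrum is injective.
 \item Lemma~\ref{lem:evenordinary} shows the high spectrum is injective apart from the special vertex.
 \item The numerical gap $2h-5>0$ between the least low value and the greatest high value, from the proof of Lemma~\ref{lem:evendefect}, rules out low--high collisions.
\end{itemize}

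\emph{Main obstacle.} The only real work is confirming that $h\ge31$ covers every positivity requirement used along the way. The requirements are:
\begin{itemize}
 \item the condition $a_{h+3}=M_{\rm e}+h-29>M_{\rm e}$, and after repair $a_{h+3}=M_{\rm e}+h-30>M_{\rm e}$, which both need $h\ge31$;
 \item the endpoint difference $2h^2-13h-16>0$, which holds for $h\ge8$;
 \item the statement that the discriminant window of width $32$ contains at most one odd square. For this, note that $8h-23\ge225=15^2$ once $h\ge31$, and consecutive odd squares at or above $15^2$ differ by at least $64>32$.
\end{itemize}
Given these checks, the theorem follows.
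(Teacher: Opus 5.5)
Your proposal is correct and follows the paper's proof, which simply combines Lemmas~\ref{lem:regular}, \ref{lem:evendefect}, and \ref{lem:evenrepair} under the standing hypothesis $h\ge31$. Your explicit check that $S_2^{\rm e}$ is legal on $Y_k$ is a useful addition, since the paper states this legality only in the odd case: row $h+3$ of $Y_k$ is $\{h+1\}\cup([3h-3,4h]\setminus\{3h-1\})$, because it is touched by both the $a=h+1$ and the $a=h+3$ switches (not one or the other), and row $3h-1$ is the untouched interval $[h+2,4h]$.
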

\begin{proof}
Combine Lemmas~\ref{lem:regular}, \ref{lem:evendefect}, and \ref{lem:evenrepair}.
\end{proof}

\section{Finite parameters}
The symbolic arguments leave $k=15$ and the even parameters $16\le k\le60$.  Each case starts from $Y_k$, applies $S_2$ of the relevant parity, applies $S_3$ only if a collision remains, and in twelve cases uses one additional switch.  Table~\ref{tab:finite} records the complete switch sequence after $Y_k$.

\begin{longtable}{r p{0.75\textwidth}}
\caption{Additional switches for the 24 finite parameters, starting from $Y_k$.}\label{tab:finite}\\
\toprule
$k$ & switches after $Y_k$\\
\midrule
\endfirsthead
\toprule
$k$ & switches after $Y_k$\\
\midrule
\endhead
15 & $S(8,24;6,15)$\\
16 & $S(11,23;9,16)$, $S(2,11;15,16)$, $S(23,3;12,1)$\\
18 & $S(12,26;10,18)$, $S(2,12;17,18)$, $S(12,3;25,1)$\\
20 & $S(13,29;11,20)$\\
22 & $S(14,32;12,22)$, $S(2,14;21,22)$\\
24 & $S(15,35;13,24)$, $S(2,15;23,24)$, $S(35,3;13,1)$\\
26 & $S(16,38;14,26)$\\
28 & $S(17,41;15,28)$, $S(2,17;27,28)$\\
30 & $S(18,44;16,30)$\\
32 & $S(19,47;17,32)$, $S(2,19;31,32)$\\
34 & $S(20,50;18,34)$, $S(2,20;33,34)$, $S(50,3;54,1)$\\
36 & $S(21,53;19,36)$\\
38 & $S(22,56;20,38)$, $S(2,22;37,38)$\\
40 & $S(23,59;21,40)$, $S(2,23;39,40)$, $S(23,3;58,1)$\\
42 & $S(24,62;22,42)$, $S(2,24;41,42)$, $S(60,3;25,1)$\\
44 & $S(25,65;23,44)$, $S(2,25;43,44)$, $S(25,3;64,1)$\\
46 & $S(26,68;24,46)$, $S(2,26;45,46)$, $S(68,3;27,1)$\\
48 & $S(27,71;25,48)$, $S(2,27;47,48)$, $S(27,3;70,1)$\\
50 & $S(28,74;26,50)$\\
52 & $S(29,77;27,52)$, $S(2,29;51,52)$\\
54 & $S(30,80;28,54)$, $S(2,30;53,54)$, $S(26,2;82,54)$\\
56 & $S(31,83;29,56)$, $S(2,31;55,56)$, $S(83,28;83,26)$\\
58 & $S(32,86;30,58)$, $S(2,32;57,58)$, $S(32,3;85,1)$\\
60 & $S(33,89;31,60)$\\
\bottomrule
\end{longtable}

\begin{proposition}\label{prop:finite}
For every $k$ in Table~\ref{tab:finite}, the listed switches are legal, the final matrix is $k$-balanced, and $\Gamma_k(X)$ is $2k$-regular and triangle-distinct.
\end{proposition}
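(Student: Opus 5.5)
This proposition is a finite statement about 24 explicit matrices, so the plan is a proof by exact computation, organized so that each step is independently checkable. For each $k$ in Table~\ref{tab:finite}, first build $X_k^0$ from \eqref{eq:x0}. Then apply the reference switches \eqref{eq:shift2} and $S_1$ to obtain $Y_k$; their legality is already guaranteed by Lemma~\ref{lem:referencelegal}. Next apply, in the order listed, the switches in the row of Table~\ref{tab:finite} for that $k$.

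Legality of each listed switch $S(r,s;c,d)$ is checked directly on the current matrix by verifying the pattern $x_{rc}=x_{sd}=1$ and $x_{rd}=x_{sc}=0$. Several entries in the table (for instance $S(23,3;12,1)$ or $S(83,28;83,26)$) do not follow the symbolic templates $S_2,S_3$, so their legality cannot be inferred from Appendix~A and must be read off the explicit matrix. Once every step is legal, $k$-balance of the final matrix follows as before, since each $2$-switch preserves all row and column sums. Lemma~\ref{lem:regular} then gives that $\Gamma_k(X)$ is $2k$-regular on $4k+2$ vertices.

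For triangle-distinctness, compute the vector $(a_i(X),b_i(X))_{i\in I_k}$ exactly by Lemma~\ref{lem:tdformula}, using integer matrix arithmetic. As an independent cross-check, take half the diagonal of $\mathcal A_k(X)^3$ and confirm it agrees. Then sort the $4k+2$ values and verify that consecutive entries differ. Everything is integer arithmetic on matrices of size at most $121$, so the computation is exact and small.

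To make the verification more transparent than a bare computer check, I would also track the spectrum incrementally. Starting from Lemma~\ref{lem:Yspectrum}, each switch's effect is obtained via Lemma~\ref{lem:variation}, exactly as in Lemmas~\ref{lem:oddvariation} and~\ref{lem:evenvariation}. For the rows of the table that use only $S_2$ or $S_2,S_3$, the symbolic arguments of Sections~4--5 apply, except where they used $h\ge8$ or $h\ge31$. Those points are the inequalities in Lemmas~\ref{lem:evenlow} and~\ref{lem:evenordinary} (e.g.\ $a_{h+3}=M_{\rm e}+h-29$ and $2h^2-13h-16>0$) and the discriminant-gap argument. So I would identify which inequalities fail for each small $h$ and check the resulting finitely many potential collisions by hand.

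The main obstacle is the twelve cases with a third ad hoc switch. There the extra switch lies outside the patterns analyzed symbolically, and its side effects on the spectrum must be confirmed not to create new equalities. For these I would rely on the exact full recomputation of all triangle-degrees rather than on incremental bookkeeping. I would record the sorted spectrum, or at least its minimum gap, as a reproducible certificate for each $k$.
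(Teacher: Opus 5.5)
Your plan is correct and is essentially the paper's proof: an exact finite verification that rebuilds $Y_k$ from $X_k^0$, checks the legality of each listed switch at the moment it is applied, confirms the margins and $2k$-regularity, and recomputes all $4k+2$ triangle-degrees to confirm that they are pairwise distinct. The differences are cosmetic. You compute the triangle-degrees via Lemma~\ref{lem:tdformula} and cross-check against the diagonal of $\mathcal A_k(X)^3$ (which is $242\times242$ at $k=60$, not $121$), whereas the paper's verifier intersects neighbour sets in $\Gamma_k(X)$; your incremental bookkeeping is an optional supplement and is not needed.
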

\begin{proof}
For each row of Table~\ref{tab:finite}, the Supporting Information contains an exact certificate and an independent verifier.  Starting from the definition of $X_k^0$, the verifier reconstructs the deterministic switches in \eqref{eq:shift2} and $S_1$, applies the switches listed in the table, and checks their legality at the moment they are performed.  It then checks the prescribed row and column margins, constructs the full graph $\Gamma_k(X)$, verifies $2k$-regularity, recomputes every triangle-degree from intersections of neighbor sets, and confirms that the resulting values are pairwise distinct.  All computations are exact integer operations.  Running the verifier on the supplied certificates validates all 24 rows of the table, proving the proposition.
\end{proof}

\section{Completion of the proof}
\begin{proof}[Proof of Theorem~\ref{thm:k}]
For odd $k\ge17$, use Theorem~\ref{thm:odd}.  For even $k\ge62$, use Theorem~\ref{thm:even}.  The remaining $k\ge15$ are covered by Proposition~\ref{prop:finite}.  In every case Lemma~\ref{lem:regular} yields a $2k$-regular triangle-distinct graph on $4k+2$ vertices.  Lemma~\ref{lem:complement} gives a $(2k+1)$-regular triangle-distinct complement.
\end{proof}

\begin{proof}[Proof of Theorem~\ref{thm:main}]
Theorem~\ref{thm:k} covers every $r\ge30$: write $r=2k$ or $r=2k+1$ with $k\ge15$.  For each $9\le r\le29$, an explicit regular triangle-distinct witness is supplied by Hak, Kozerenko, and Serdiuk \cite{Hak2026,HakSoftware2026}.  For completeness, one such witness for each regularity, together with an independent exact check, is included in the Supporting Information.  Hence every $r\ge9$ occurs.
\end{proof}

\section*{Data Availability Statement}
All finite certificates and verification scripts used in Proposition~\ref{prop:finite} are provided as Supporting Information.  The external graph6 witnesses used for $9\le r\le29$ are publicly archived in \emph{EvolutionaryGraphs}, version 1.1 \cite{HakSoftware2026}.  For reproducibility, the Supporting Information also contains a local copy of one witness for each of these regularities and an independent script that decodes the graph6 strings and recomputes regularity and all triangle-degrees from the adjacency relation.  No additional data are required to reproduce the results of this article.


\begin{thebibliography}{99}
\bibitem{Berikkyzy2024}
Z. Berikkyzy, B. Bjorkman, H. Smith Blake, S. Jahanbekam, L. Keough, K. Moss, D. Rorabaugh, and S. Shan,
\emph{Triangle-degree and triangle-distinct graphs}, Discrete Math. \textbf{347} (2024), no.~1, 113695. \href{https://doi.org/10.1016/j.disc.2023.113695}{doi:10.1016/j.disc.2023.113695}.

\bibitem{ChartrandErdos1988}
G. Chartrand, P. Erd\H{o}s, and O. R. Oellermann,
\emph{How to define an irregular graph}, College Math. J. \textbf{19} (1988), no.~1, 36--42. \href{https://doi.org/10.1080/07468342.1988.11973088}{doi:10.1080/07468342.1988.11973088}.

\bibitem{ChartrandHolbert1987}
G. Chartrand, K. S. Holbert, O. R. Oellermann, and H. C. Swart,
\emph{$F$-degrees in graphs}, Ars Combin. \textbf{24} (1987), 133--148.

\bibitem{Hak2026}
A. Hak, S. Kozerenko, and A. Serdiuk,
\emph{Regular $K_3$-irregular graphs}, arXiv:2507.18776 (2025), version 2, revised 11 August 2026. \href{https://doi.org/10.48550/arXiv.2507.18776}{doi:10.48550/arXiv.2507.18776}.

\bibitem{HakSoftware2026}
A. Hak, S. Kozerenko, and A. Serdiuk,
\emph{EvolutionaryGraphs}, version 1.1, Zenodo, 2026. \href{https://doi.org/10.5281/zenodo.21877760}{doi:10.5281/zenodo.21877760}.

\bibitem{NairVijayakumar1994}
B. R. Nair and A. Vijayakumar,
\emph{About triangles in a graph and its complement}, Discrete Math. \textbf{131} (1994), no.~1--3, 205--210. \href{https://doi.org/10.1016/0012-365X(94)90385-9}{doi:10.1016/0012-365X(94)90385-9}.

\bibitem{Stevanovic2024}
D. Stevanovi\'c, M. Ghebleh, G. Caporossi, A. Vijayakumar, and S. Stevanovi\'c,
\emph{On regular triangle-distinct graphs}, Comput. Appl. Math. \textbf{43} (2024), article~336. \href{https://doi.org/10.1007/s40314-024-02854-9}{doi:10.1007/s40314-024-02854-9}.
\end{thebibliography}
\end{document}